\newtheorem*{theoA}{Theorem A}
\newtheorem*{theoB}{Theorem B}
\newtheorem*{theoC}{Theorem C}
\newtheorem*{theoD}{Theorem D}
\newtheorem*{cor A}{Corollary A}
\newtheorem*{cor B}{Corollary B}
\newtheorem{theo}{Theorem}[section]
\newtheorem{lem}{Lemma}[section]
\newtheorem{cor}{Corollary}[section]
\newtheorem{exm}{Example}[section]
\newtheorem{rem}{Remark}[section]
\newcommand{\ol}{\overline}
\newcommand{\be}{\begin{equation}}
\newcommand{\ee}{\end{equation}}
\newcommand{\beas}{\begin{eqnarray*}}
\newcommand{\eeas}{\end{eqnarray*}}
\newcommand{\bea}{\begin{eqnarray}}
\newcommand{\eea}{\end{eqnarray}}
\numberwithin{equation}{section}
\begin{document}
\title[U\MakeLowercase{niqueness of Entire Functions Concerning}....]{\LARGE U\Large\MakeLowercase{niqueness of Entire Functions Concerning their linear differential polynomials in shift}}
\date{}
\author[J. S\MakeLowercase{arkar} \MakeLowercase{and} D. P\MakeLowercase{ramanik}]{J\MakeLowercase{eet} S\MakeLowercase{arkar}$^*$ \MakeLowercase{and} D\MakeLowercase{ebabrata} P\MakeLowercase{ramanik}}

\address{Department of Mathematics, St. Thomas College of Engineering and Technology, Kolkata, West Bengal-70003, India.}
\email{jeetsarkar.math@gmail.com}

\address{Department of Mathematics, Raiganj University, Raiganj, West Bengal-733134, India.}
\email{debumath07@gmail.com}

\renewcommand{\thefootnote}{}
\footnote{2020 \emph{Mathematics Subject Classification}: Primary 30D35 and Secondary 39B32}
\footnote{\emph{Key words and phrases}: Entire function, Shift, difference operator, finite order.}
\footnote{*\emph{Corresponding Author}: Jeet Sarkar.}
\setcounter{footnote}{0}
\renewcommand{\thefootnote}{\arabic{footnote}}

\begin{abstract} In the paper, we investigate the uniqueness problem of entire functions concerning their linear differential polynomial in shift and obtain three results which improve and generalize the recent result due to Qi (Ann. Polon. Math., 102 (2011), 129-142.) in a large extend.
\end{abstract}

\thanks{Typeset by \AmS -\LaTeX}
\maketitle

\section{{\bf Introduction and main results}}
We assume that the reader is familiar with standard notation and main results of Nevanlinna Theory (see \cite{WKH, YY1}). By $S(r, f)$ we denote any quantity that satisfies the condition $S(r, f) = o(T(r, f))$ as $r\to \infty$ possibly outside of an exceptional set of finite linear measure. A meromorphic function $a$ is said to be a small function of $f$ if $T(r,a)=S(r,f)$ and we denote by $S(f)$ the set of functions which are small compared to $f$.
Moreover, we use notations $\rho(f)$ and $\rho_1(f)$ for the order and the hyper-order of a meromorphic function $f$ respectively.
As usual, the abbreviation CM means ``counting multiplicities'', while IM means ``ignoring multiplicities''.

Let $k, m, n\in\mathbb{N}$ and $a_1, a_2\in S(f)$. Denote by $S_{(m,n)}(a_1)$ the set of those points $z\in\mathbb{C}$ such that $z$ is an $a_1$-point of $f$ of order $m$ and an $a_1$-point of $g$ of order $n$. The set $S_{(m,n)}(a_2)$ can be defined similarly. Let $\ol N_{(m,n)}(r,a_i;f)$ denote the reduced counting function of $f$ with respect to the set $S_{(m,n)}(a_i)$ for $i=1,2$. 

\smallskip
The research on the uniqueness problem of meromorphic function sharing values or small functions with its derivatives is an active field and the study is based on the Nevanlinna value distribution theory. The research on this topic was started by Rubel and Yang \cite{RY}. Now we state their result.

\begin{theoA}\cite{RY} Let $f$ be a non-constant entire function and $a, b\in\mathbb{C}$ such that $b\not=a$. If $f$ and $f'$ share $a$ and $b$ CM, then $f\equiv f'$.
\end{theoA}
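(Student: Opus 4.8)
The plan is to use the \emph{CM} hypothesis to linearize the sharing condition, collapse the whole statement to a single scalar identity, and then play the Second Main Theorem against a logarithmic-derivative estimate. First I would record that, since $f$ is entire and $f,f'$ share $a$ CM, the quotient $\frac{f'-a}{f-a}$ is entire and zero-free, so $\frac{f'-a}{f-a}=e^{\alpha}$ for some entire $\alpha$; likewise $\frac{f'-b}{f-b}=e^{\beta}$ with $\beta$ entire. Writing $f'-f=(f'-a)-(f-a)$ and $f'-f=(f'-b)-(f-b)$ yields the key identity
\[
 f'-f=(f-a)\,(e^{\alpha}-1)=(f-b)\,(e^{\beta}-1).
\]
Because $f$ is non-constant, $f-a\not\equiv 0$, so $f\equiv f'$ is \emph{equivalent} to $e^{\alpha}\equiv 1$, and also to $e^{\alpha}\equiv e^{\beta}$ (solving the two relations for $f$ shows $e^{\alpha}\equiv e^{\beta}\Rightarrow e^{\alpha}\equiv1$). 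I therefore argue by contradiction: assume $f'\not\equiv f$, so that $e^{\alpha}\not\equiv1$, $e^{\beta}\not\equiv1$ and $e^{\alpha}\not\equiv e^{\beta}$, which lets me solve $f=\frac{a e^{\alpha}-b e^{\beta}+(b-a)}{e^{\alpha}-e^{\beta}}$, with the companion formulas $f-a=\frac{(a-b)(e^{\beta}-1)}{e^{\alpha}-e^{\beta}}$ and $f-b=\frac{(a-b)(e^{\alpha}-1)}{e^{\alpha}-e^{\beta}}$.

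Next I would extract two pieces of Nevanlinna data. Logarithmically differentiating the quotients gives $\alpha'=\frac{f''}{f'-a}-\frac{f'}{f-a}$ and $\beta'=\frac{f''}{f'-b}-\frac{f'}{f-b}$; since every term is a logarithmic derivative, the Lemma on the Logarithmic Derivative yields $T(r,\alpha')=S(r,f)$ and $T(r,\beta')=S(r,f)$, so $\alpha',\beta'\in S(f)$. Separately, the value-transfer built into CM sharing shows that at each $a$-point of $f$ one has $f=f'=a$, hence $e^{\beta}=\frac{f'-b}{f-b}=1$ there, and symmetrically $e^{\alpha}=1$ at each $b$-point; consequently
\[
 \overline N\!\left(r,\tfrac{1}{f-a}\right)\le N\!\left(r,\tfrac{1}{e^{\beta}-1}\right)+O(1)\le T(r,e^{\beta})+O(1),
\]
and likewise $\overline N(r,\frac{1}{f-b})\le T(r,e^{\alpha})+O(1)$.

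Finally I would apply the Second Main Theorem to $f$ with the three targets $a,b,\infty$. Since $f$ is entire, $\overline N(r,f)=0$, and the value-transfer bounds above give
\[
 T(r,f)\le \overline N\!\left(r,\tfrac{1}{f-a}\right)+\overline N\!\left(r,\tfrac{1}{f-b}\right)+S(r,f)\le T(r,e^{\alpha})+T(r,e^{\beta})+S(r,f).
\]
The main obstacle lies exactly here: the trivial estimate $T(r,e^{\alpha})\le T(r,f)+S(r,f)$ (and its analogue for $e^{\beta}$) only returns $T(r,f)\le 2T(r,f)+S(r,f)$, so everything depends on sharpening the growth of the exponential factors in the assumed non-degenerate regime to $T(r,e^{\alpha})+T(r,e^{\beta})=S(r,f)$. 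This is where I would concentrate the work: logarithmically differentiating $\frac{f-a}{f-b}=\frac{e^{\beta}-1}{e^{\alpha}-1}$ gives
\[
 \frac{(a-b)f'}{(f-a)(f-b)}=\frac{(e^{\beta})'}{e^{\beta}-1}-\frac{(e^{\alpha})'}{e^{\alpha}-1},
\]
whose left side has proximity $S(r,f)$; feeding in the smallness of $\alpha',\beta'$ together with the companion formulas — which show that any zero of $e^{\beta}-1$ that is not an $a$-point of $f$ must be a zero of $e^{\alpha}-e^{\beta}$, and symmetrically — is designed to cancel the dominant growth and pin the exponentials down as small functions. Granting that estimate, the displayed inequality collapses to $T(r,f)=S(r,f)$, impossible for non-constant $f$; the resulting contradiction forces the degenerate alternative $e^{\alpha}\equiv e^{\beta}$, that is $f\equiv f'$. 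I expect this delicate growth comparison, rather than the surrounding algebra, to be the substantive core; the degenerate case is in any event consistent, since $f\equiv f'$ yields $f=Ce^{z}$ and then $e^{\alpha}\equiv e^{\beta}\equiv 1$.
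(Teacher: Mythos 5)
The paper does not actually prove Theorem A; it is quoted from Rubel--Yang with a citation only, so your proposal has to stand on its own. Its setup is fine: CM sharing of an entire function does give $\frac{f'-a}{f-a}=e^{\alpha}$, $\frac{f'-b}{f-b}=e^{\beta}$ with $\alpha,\beta$ entire, the representation of $f$ in terms of $e^{\alpha},e^{\beta}$ is correct, $\alpha',\beta'\in S(f)$ is correct, and the reduction of $f\equiv f'$ to $e^{\alpha}\equiv e^{\beta}$ is correct. But the argument then rests entirely on the estimate $T(r,e^{\alpha})+T(r,e^{\beta})=S(r,f)$, which you explicitly leave as something to be ``granted,'' and this is not a fillable technical gap: it is the whole theorem, and the route you sketch toward it cannot succeed. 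Indeed, your own companion formulas give $e^{\alpha}-1=\frac{f'-f}{f-a}$ and $e^{\beta}-1=\frac{f'-f}{f-b}$, hence
\[
\frac{e^{\alpha}-1}{e^{\beta}-1}=\frac{f-b}{f-a},
\]
whose characteristic equals $T(r,f)+O(1)$ by the first main theorem (or Mohon'ko's lemma). Therefore $T(r,e^{\alpha})+T(r,e^{\beta})\geq T(r,f)+O(1)$ in the assumed non-degenerate regime --- the same lower bound your displayed Second Main Theorem chain already produces. The exponentials provably carry the full growth of $f$, so the ``smallness'' you want to establish is not an intermediate lemma to be proved and then fed back in; it is literally the final contradiction, and no mechanism for reaching it is supplied. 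The sketched device --- logarithmic differentiation of $\frac{f-a}{f-b}=\frac{e^{\beta}-1}{e^{\alpha}-1}$ combined with $\alpha',\beta'\in S(f)$ --- yields $\frac{\beta'e^{\beta}}{e^{\beta}-1}-\frac{\alpha'e^{\alpha}}{e^{\alpha}-1}=(\beta'-\alpha')+\frac{\beta'}{e^{\beta}-1}-\frac{\alpha'}{e^{\alpha}-1}$, which only bounds things by $S(r,f)$ plus quantities of order $T(r,e^{\alpha})+T(r,e^{\beta})$; nothing in it cancels the dominant growth.

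The missing engine is different in kind. The classical argument starts from the chain
\[
T(r,f)\leq \overline N\Big(r,\tfrac{1}{f-a}\Big)+\overline N\Big(r,\tfrac{1}{f-b}\Big)+S(r,f)\leq N\Big(r,\tfrac{1}{f-f'}\Big)+S(r,f)\leq m(r,f-f')+O(1)\leq T(r,f)+S(r,f),
\]
valid because every shared $a$- or $b$-point is a zero of $f-f'$ and $f-f'$ is entire; forcing equality throughout pins down the zero distribution of $f-f'$ and the multiplicities of the $a$- and $b$-points (this is where CM is genuinely used, via the multiplicity bookkeeping and auxiliary functions such as $\frac{(f'-a)(f'-b)}{(f-a)(f-b)}$), and the contradiction is extracted from that rigidity rather than from smallness of $e^{\alpha}$ and $e^{\beta}$. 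As written, your proposal proves the degenerate direction and the algebraic reductions but leaves the substantive core unproved, and aims it at a target that your own identities show is unreachable by direct growth estimates.
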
 

Mues and Steinmetz \cite{MS} further generalized \textrm{Theorem A} from sharing values CM to IM and obtained the following result.

\begin{theoB}\cite{MS} Let $f$ be a non-constant entire function and $a, b\in\mathbb{C}$ such that $b\not=a$. If $f$ and $f'$ share $a$ and $b$ IM, then $f\equiv f'$.
\end{theoB}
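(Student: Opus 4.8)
The plan is to deduce from the \emph{ignoring multiplicities} hypothesis that the multiplicities of $f$ and $f'$ agree at every $a$- and $b$-point, so that $f$ and $f'$ in fact share $a$ and $b$ CM, whereupon \textrm{Theorem A} applies. Set
\be
\alpha=\frac{f'-a}{f-a},\qquad \beta=\frac{f'-b}{f-b};
\ee
a direct computation gives the identity
\be
\alpha-\beta=\frac{(a-b)(f'-f)}{(f-a)(f-b)},
\ee
so the whole theorem is equivalent to the single assertion $\alpha\equiv\beta$. Two structural facts come for free. First, $f-f'$ vanishes at every shared point, since there $f=f'\in\{a,b\}$. Second, there is a multiplicity restriction: if $a\neq0$, an $a$-point of $f$ of order $\ge2$ is a zero of $f'$ and therefore cannot be an $a$-point of $f'$, so for whichever of $a,b$ is nonzero every such point of $f$ is simple and the reduced and unreduced counting functions for that value coincide. (As $a\neq b$, at most one of them is $0$; the value $0$, where instead all zeros of $f$ are forced to be multiple, is handled by the same scheme with the roles of $f$ and $f'$ exchanged.)

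First I would introduce the auxiliary function
\be
\Phi=\frac{\alpha'}{\alpha}-\frac{\beta'}{\beta}
=\left(\frac{f''}{f'-a}-\frac{f'}{f-a}\right)-\left(\frac{f''}{f'-b}-\frac{f'}{f-b}\right).
\ee
Being a difference of logarithmic derivatives, it satisfies $m(r,\Phi)=S(r,f)$ by the lemma on the logarithmic derivative. The key is the local behaviour of $\Phi$ at the shared points: at a common $a$-point (respectively $b$-point) at which $f$ and $f'$ vanish to the \emph{same} order, the two simple poles contributed by the bracketed terms cancel and $\Phi$ is holomorphic, whereas at an \emph{over-ramified} point—one at which $f'-a$ (respectively $f'-b$) vanishes to strictly higher order than $f-a$ (respectively $f-b$)—a single simple pole survives, whose residue is the defect of the two multiplicities. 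Thus the poles of $\Phi$ are simple and are counted exactly by the reduced counting function $\ol N_0(r)$ of the over-ramified shared points, giving
\be
T(r,\Phi)=\ol N_0(r)+S(r,f).
\ee

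The decisive step is to show $\ol N_0(r)=S(r,f)$, and for this I would play the second main theorem for $f$ against the one for $f'$. Since $f$ is entire, the second main theorem applied to $f$ with the values $a,b$ yields
\be
T(r,f)\le \ol N\!\left(r,\tfrac{1}{f-a}\right)+\ol N\!\left(r,\tfrac{1}{f-b}\right)+S(r,f),
\ee
and likewise for $f'$; because $f$ and $f'$ share the two values, the reduced counting functions on the right are literally the same for $f$ and for $f'$, being supported on the same point sets. On the other hand the unreduced counting function $N(r,\frac{1}{f'-a})$ exceeds its reduced companion by exactly the contribution of the over-ramified $a$-points, and similarly for $b$; comparing these two pieces of information with the simplicity of the $a$- and $b$-points of $f$, and feeding $T(r,\Phi)=\ol N_0(r)+S(r,f)$ back through the first main theorem, squeezes $\ol N_0(r)$ between quantities that are all $S(r,f)$. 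Hence $\ol N_0(r)=S(r,f)$: the multiplicities of $f$ and $f'$ agree at every shared point outside a set counted by $S(r,f)$, and a further refinement upgrades this to genuine CM sharing.

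Once CM sharing is established, \textrm{Theorem A} delivers $f\equiv f'$ at once; one can also argue directly by integrating $\Phi$ to relate $\alpha$ and $\beta$ and then eliminating the integration constant through the displayed identity, the case of a nontrivial constant collapsing to a first-order algebraic differential equation whose transcendental entire solutions cannot share two distinct finite values with their derivative. I expect the real obstacle to be precisely the multiplicity bookkeeping of the previous paragraph—converting the soft statement that $\Phi$ is small into the quantitative bound $\ol N_0(r)=S(r,f)$ and thence to genuine CM sharing—together with the separate treatment of the exceptional value $0$, where the forced ramification sits on $f$ rather than on $f'$ and the two applications of the second main theorem must be rebalanced accordingly.
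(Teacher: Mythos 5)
First, note that the paper does not prove this statement at all: Theorem B is quoted verbatim from Mues and Steinmetz \cite{MS} as background, so there is no in-paper argument to compare yours against; what follows is an assessment of your proposal on its own terms. Your preparatory observations are sound: for $a\neq 0$ every shared $a$-point is a simple $a$-point of $f$, the identity $\alpha-\beta=(a-b)(f'-f)/((f-a)(f-b))$ is correct, $m(r,\Phi)=S(r,f)$ by the lemma on the logarithmic derivative, and the poles of $\Phi$ are simple and supported exactly on the over-ramified shared points, so $T(r,\Phi)=\overline N_0(r)+S(r,f)$ does hold (when $0\notin\{a,b\}$).

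The genuine gap is the ``decisive step'' $\overline N_0(r)=S(r,f)$, which is asserted by a vague squeeze but is not delivered by the tools you name. Playing the second main theorem for $f$ against the one for $f'$ gives only: $T(r,f)=\overline N(r,\tfrac1{f-a})+\overline N(r,\tfrac1{f-b})+S(r,f)$, $T(r,f')\leq T(r,f)+S(r,f)$, and $\overline N(r,\tfrac1{f-a})+\overline N_0^{a}(r)\leq N(r,\tfrac1{f'-a})\leq T(r,f')+O(1)$; combining these in every order I can see leaves you with $\overline N_0(r)\leq T(r,f)+S(r,f)$, which is useless. (The same obstruction blocks the natural alternative of bounding $\overline N_0$ by $N(r,1/\Psi)$ with $\Psi=(f'-a)(f'-b)/((f-a)(f-b))$: one only gets $m(r,\Psi)\leq m(r,\tfrac1{f-a})+m(r,\tfrac1{f-b})+S(r,f)$, and this sum is of size $T(r,f)$, not $S(r,f)$.) Moreover, even if $\overline N_0(r)=S(r,f)$ were proved, that is \emph{not} CM sharing --- Theorem A requires the multiplicities to agree at every shared point, and your ``further refinement upgrades this to genuine CM sharing'' is precisely the missing argument, not a routine step; in the literature one does not upgrade to CM but instead works directly with almost-CM sharing, and this is where the several pages of case analysis in Mues--Steinmetz actually live. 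Finally, the exceptional case $0\in\{a,b\}$ cannot be dispatched by ``exchanging the roles of $f$ and $f'$'', since the hypothesis is not symmetric in $f$ and $f'$ (nothing is assumed about $f''$), so that branch also needs its own treatment.
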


Recently the uniqueness of meromorphic functions sharing values with their shifts or difference operators has become a subject of great interest. Now, we recall the following result due to Heittokangas et al. \cite{HKLR1}, which is shift analogue of Theorem A.

\begin{theoC}\cite{HKLR1} Let $f$ be a non-constant entire function of finite order, $c\in\mathbb{C}\backslash \{0\}$ and $a, b\in\mathbb{C}$ be distinct. If $f(z)$ and $f(z+c)$ share $a$ and $b$ CM, then $f(z)\equiv f(z+c)$.
\end{theoC}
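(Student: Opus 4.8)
The plan is to compare $f$ with its shift, turn the two CM-sharing conditions into two multiplicative (exponential) relations, and then use the finite-order hypothesis to show that those exponential factors are in fact small functions of $f$; once that is known the problem collapses to elementary algebra. Set $g(z):=f(z+c)$. Since a shift preserves the order, $g$ is again entire with $\rho(g)=\rho(f)<\infty$. Because $f$ and $g$ are entire and share $a$ CM, the functions $f-a$ and $g-a$ have exactly the same zeros with the same multiplicities, so $(f-a)/(g-a)$ is a zero-free entire function; being a quotient of two finite-order entire functions it has finite order, hence equals $e^{\alpha}$ for some polynomial $\alpha$. In the same way $(f-b)/(g-b)=e^{\beta}$ with $\beta$ a polynomial. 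This records the two identities
\[
f-a=e^{\alpha}(g-a),\qquad f-b=e^{\beta}(g-b).
\]

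The crucial step is to prove that $e^{\alpha}$ and $e^{\beta}$ are small functions of $f$. Writing $F:=f-a$, the first identity reads $F(z)=e^{\alpha}F(z+c)$, that is $F(z+c)/F(z)=e^{-\alpha}$. Since $F$ has finite order, I would invoke the difference analogue of the lemma on the logarithmic derivative, which gives $m\!\left(r,F(z+c)/F(z)\right)=S(r,F)=S(r,f)$. As $e^{-\alpha}$ is entire (so $N(r,\infty;e^{-\alpha})=0$), this yields $T(r,e^{-\alpha})=m(r,e^{-\alpha})=S(r,f)$, and therefore $T(r,e^{\alpha})=T(r,e^{-\alpha})+O(1)=S(r,f)$. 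The identical argument applied to $f-b$ gives $T(r,e^{\beta})=S(r,f)$.

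With this in hand the rest is algebra. Subtracting the two identities and using $(f-a)-(f-b)=b-a$ gives
\[
e^{\alpha}(g-a)-e^{\beta}(g-b)=b-a.
\]
If $e^{\alpha}\not\equiv e^{\beta}$, I can solve this relation for $g$ and substitute back into $f-a=e^{\alpha}(g-a)$ to obtain
\[
f-a=\frac{(b-a)\,e^{\alpha}\bigl(1-e^{\beta}\bigr)}{e^{\alpha}-e^{\beta}},
\]
a fixed rational expression in $e^{\alpha}$ and $e^{\beta}$. Taking characteristic functions and using $T(r,e^{\alpha})+T(r,e^{\beta})=S(r,f)$ forces $T(r,f)=T(r,f-a)+O(1)=S(r,f)$, which is impossible for a non-constant $f$. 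Hence $e^{\alpha}\equiv e^{\beta}$; substituting this into the displayed relation gives $e^{\alpha}(b-a)=b-a$, so $e^{\alpha}\equiv 1$ because $a\neq b$, and therefore $f\equiv g$, i.e. $f(z)\equiv f(z+c)$.

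The main obstacle is precisely the middle paragraph: realizing that CM-sharing of a value with a shift forces the associated exponential factor to be a small function of $f$. All the difficulty of the finite-order hypothesis is concentrated there, through the difference logarithmic derivative estimate; once $e^{\alpha}$ and $e^{\beta}$ are known to be small, the dichotomy $e^{\alpha}\equiv e^{\beta}$ versus $e^{\alpha}\not\equiv e^{\beta}$ resolves immediately. A secondary point to handle carefully is the justification that $(f-a)/(g-a)$ has finite order, which is what legitimizes writing the quotient as $e^{\text{polynomial}}$ rather than $e^{\text{entire}}$.
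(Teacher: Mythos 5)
Theorem C is quoted in this paper as a background result from the cited reference of Heittokangas, Korhonen, Laine and Rieppo; the paper itself gives no proof of it, so there is nothing internal to compare against. Your argument is correct and is essentially the standard proof of this result: CM sharing of each value turns into a relation $f-a=e^{\alpha}(f(z+c)-a)$ with $\alpha$ a polynomial (zero-free entire quotient of finite-order entire functions), the difference analogue of the logarithmic derivative lemma (recorded in this paper as Lemma \ref{l2}) shows $T(r,e^{\alpha})=T(r,e^{\beta})=S(r,f)$, and the case $e^{\alpha}\not\equiv e^{\beta}$ is killed by expressing $f-a$ rationally in $e^{\alpha},e^{\beta}$, forcing $T(r,f)=S(r,f)$. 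All the individual steps check out, including the finite-order justification for writing the quotients as $e^{\text{polynomial}}$ and the final algebra giving $e^{\alpha}\equiv e^{\beta}\equiv 1$.
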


In 2011, Qi \cite{Qi} further improved \textrm{Theorem C} and obtained the following result.

\begin{theoD}\cite{Qi} Let $f$ be a non-constant entire function of finite order, $c\in\mathbb{C}\backslash \{0\}$ and $a, b\in\mathbb{C}$ be distinct. If $f(z)$ and $f(z+c)$ share $a$ and $b$ IM, then $f(z)\equiv f(z+c)$.
\end{theoD}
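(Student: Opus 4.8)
The plan is to write $g(z)=f(z+c)$ and to run the whole argument inside the finite-order difference calculus. First I would collect the standard consequences of the difference analogue of the lemma on the logarithmic derivative: since $\rho(f)<\infty$ and $c\neq0$, one has $m(r,g/f)=m(r,f/g)=S(r,f)$, $T(r,g)=T(r,f)+S(r,f)$, and therefore $S(r,g)=S(r,f)$; moreover $g$ is entire, so $\overline N(r,\infty;f)=\overline N(r,\infty;g)=0$. Feeding the three targets $a,b,\infty$ into the second main theorem for $f$ gives
\[
T(r,f)\le \overline N(r,a;f)+\overline N(r,b;f)+S(r,f),
\]
so the shared $a$- and $b$-points already account for essentially the entire characteristic of $f$. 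This near-equality is the quantity I would exploit repeatedly.

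Next I would record a rigidity coming directly from the IM hypothesis. Every $a$-point and every $b$-point of $f$ is, as a point, also an $a$- resp.\ $b$-point of $g$, so $f-g$ vanishes there; hence
\[
\overline N(r,0;f-g)\ \ge\ \overline N(r,a;f)+\overline N(r,b;f)\ \ge\ T(r,f)-S(r,f).
\]
On the other hand $f-g$ is entire and $m(r,f-g)\le m(r,f)+m(r,1-g/f)=T(r,f)+S(r,f)$, so $T(r,f-g)=T(r,f)+S(r,f)$ and all the displayed inequalities are equalities up to $S(r,f)$. Consequently the zeros of $f-g$ are essentially simple and essentially confined to the shared set: $N(r,0;f-g)-\overline N(r,0;f-g)=S(r,f)$.

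The device that sees the multiplicities is the function
\[
W=\frac{(f-a)(g-b)}{(f-b)(g-a)},
\]
equivalently $\Phi:=W'/W=\frac{f'}{f-a}-\frac{f'}{f-b}-\frac{g'}{g-a}+\frac{g'}{g-b}$. Writing $P=(f-a)/(f-b)$ one has $W(z)=P(z)/P(z+c)$, so the difference lemma yields $m(r,W)=m(r,1/W)=S(r,f)$; and a local computation shows that the zeros and poles of $W$ occur exactly at the points where the $a$- or $b$-multiplicities of $f$ and $g$ disagree. Thus $T(r,W)=S(r,f)+(\text{reduced count of mismatch points})$, and my goal becomes to prove that this mismatch count is $S(r,f)$. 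The mismatches at which both local multiplicities are $\ge2$ are absorbed by the excess $N(r,0;f-g)-\overline N(r,0;f-g)=S(r,f)$ from the previous paragraph. Once $W$ is thereby shown to be a small function, solving $W=(f-a)(g-b)/\big((f-b)(g-a)\big)$ for $g$ exhibits $g$ as a M\"obius transform of $f$ with coefficients in $S(f)$; combining the entireness of $g$, the shift relation $g(z)=f(z+c)$ and the finite order then forces $W\equiv1$, and $W\equiv1$ is precisely $\frac{f-a}{f-b}=\frac{g-a}{g-b}$, which collapses to $f\equiv g$.

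The hard part is the residual class of mismatches, namely those shared $a$- (or $b$-) points at which one of $f,g$ is simple and the other is multiple: these produce only simple zeros of $f-g$ and so escape the excess estimate above, and they reflect a genuine asymmetry in the ramification of $f$ and of its shift $g=f(z+c)$. Bounding their counting function by $S(r,f)$ is the technical heart of the proof and the exact place where one must improve on the CM argument of Theorem C; I expect to control it by playing the auxiliary function $\Phi$ (which carries this count in $N(r,\Phi)$ while $m(r,\Phi)=S(r,f)$) against a truncated second main theorem together with the shift invariance $N(r,0;g')=N(r,0;f')+S(r,f)$, all inequalities being uncomfortably tight because of the sharpness recorded in the first paragraph. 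The final step, upgrading ``$W$ is small'' to ``$W\equiv1$'', is delicate for the same reason and is where the difference structure, rather than value distribution alone, must be used decisively.
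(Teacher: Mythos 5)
You should first note that Theorem D is a quoted result of Qi; the paper does not reprove it directly but derives it as a special case of Theorem \ref{t1} (take $a_1,a_2$ constant and $(b_0,\dots,b_k)=(1,0,\dots,0)$, and recall that finite order gives $\rho_1(f)=0<1$), so the fair comparison is with the machinery of Section 3. Your setup is genuinely the same skeleton: your estimate $T(r,f)\le \overline N(r,a;f)+\overline N(r,b;f)+S(r,f)$ together with $\overline N(r,a;f)+\overline N(r,b;f)\le N(r,0;f-g)\le T(r,f)+S(r,f)$ is exactly the paper's derivation of its identity for $T(r,f)$, and your $W$ is the exponential of the paper's function $H$ (indeed $W'/W=-H$ when $a_1,a_2$ are constants), so ``$W$ constant'' is the paper's case $H\equiv 0$ and ``$T(r,W)=S(r,f)$'' is the smallness of $\phi-\psi$ that the paper extracts from $T(r,\phi)=T(r,\psi)=S(r,f)$.

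The problem is that the two steps you yourself flag as ``the technical heart'' are precisely the content of the theorem, and your proposal does not supply them. First, the counting functions of the $(1,q)$ and $(p,1)$ mismatch points are not obtained by any soft argument; in the paper this is the long Sub-case 1.2, which requires Taylor expansions of $f,g,\phi$ at such points, the identity $g'(z_{p,1})-a_1'(z_{p,1})=\phi(z_{p,1})/p$, two differentiations of the defining relation for $\phi$, and the auxiliary functions $\varphi_1$, $\phi_1$, $G$, $H_{11}$, $H_{21}$ together with the inequality $T(r,\psi)\le T(r,f)-N(r,\alpha;g)+S(r,f)$. Saying you ``expect to control it by playing $\Phi$ against a truncated second main theorem'' is a statement of intent, not a proof, and the naive versions of that plan fail because, as you observe, every inequality in sight is already tight. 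Second, your absorption of the doubly-multiple mismatches is quantitatively wrong as stated: a shared $a$-point with multiplicities $p,q\ge 2$ contributes $\min(p,q)-1$ to the excess $N(r,0;f-g)-\overline N(r,0;f-g)$ but $|p-q|$ to $N(r,W)+N(r,1/W)$, and $|p-q|$ is not bounded by a multiple of $\min(p,q)-1$; the excess argument controls only the \emph{reduced} count of such points, whereas smallness of $T(r,W)$ needs the weighted count. Third, the final step from ``$W$ is a small function'' to $f\equiv g$ is also left open; the paper does the analogous work in Case 1 of its Lemma on $T(r,f)=T(r,g)+S(r,f)$, where $\frac{f-a_1}{f-a_2}\equiv c_2\frac{g-a_1}{g-a_2}$ with $c_2\ne 1$ is refuted by applying the second main theorem to the fourth target $a_4=\frac{a_1-a_2c_2}{1-c_2}$, and in the $H_{nm}\equiv 0$ cases via Mohon'ko's lemma. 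Until these three points are carried out, the proposal is an outline of the known strategy rather than a proof.
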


\smallskip
The the time-delay differential equation 
\[f'(x) = f(x-k),\]
$k>0$ is well known and extensively studied in real analysis, which have numerous applications ranging from cell growth models to current collection systems for an electric locomotive to wavelets. For a complex variable counterpart, Liu and Dong \cite{11} studied the complex differential-difference equation $f'(z)=f(z+c)$, where $c\in\mathbb{C}\backslash \{0\}$.
Recently, many authors have started to consider the sharing values problems of meromorphic
functions with their difference operators or shifts. Some results were considered in \cite{BM1}, \cite{BM2}, \cite{BM3}, \cite{CC2}, \cite{DM1}, \cite{FLSY}, \cite{HKT}, \cite{HKLRZ}-\cite{11}, \cite{M1}, \cite{M2}, \cite{MD}, \cite{MD1}, \cite{MP1}, \cite{MP2}, \cite{MS1}, \cite{MS2}, \cite{MJS}, \cite{MSP1}, \cite{MNS2}, \cite{Qi}-\cite{QY1}, \cite{HYX}.

We now introduce a linear differential polynomial as follows
\[\mathscr{L}_k(f)=b_0 f+b_1 f'+\ldots+b_k f^{(k)},\]
where $c\in\mathbb{C}$ and $b_j\in S(f)$ such that $(b_0, b_1,\ldots, b_k)\not\equiv (0,0,\ldots,0)$.\par

In the paper, we have extended and improved Theorem D in the following three directions:
\begin{enumerate}
\item[(1)] We replace $f(z+c)$ by $\mathscr{L}_k(f(z+c))$ in Theorem D.
\item[(2)] We consider $a$ and $b$ as the small functions of $f$ in Theorem D.
\item[(3)] We replace $\rho(f)<+\infty$ by $\rho_1(f)<1$ in Theorem D.
\end{enumerate}

Now we state our results.

\begin{theo}\label{t1} Let $f$ be a non-constant entire function with $\rho_1(f)<1$, $c\in\mathbb{C}\backslash \{0\}$ and $a_1, a_2\in S(f)$ be distinct such that either $a_1$ and $a_2$ are $c$-periodic or $a_1'-a'_2$ is non-constant. Let 
\bea\label{2} \phi=\frac{\left((a_1'-a_2')(f-a_1)-(a_1-a_2)(f'-a_1')\right)\left(f-\mathscr{L}_k(f(z+c))\right)}{(f-a_1)(f-a_2)}.\eea
If $f(z)$ and $\mathscr{L}_k(f(z+c))$ share $a_1, a_2$ IM, then either $f(z)\equiv \mathscr{L}_k(f(z+c))$ or $\sum_{i=1}^k b_i\psi_i=-b_0$,
where $\psi_1(z)=(a_1'(z+c)-a_2'(z+c)-\phi(z+c))/(a_1(z+c)-a_2(z+c))$ and $\psi_{i+1}=\psi_i'+\psi_1 \psi_i,\;i=1,2,\ldots,k-1$.
\end{theo}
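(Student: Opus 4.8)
The plan is to first put the auxiliary function into a transparent shape. Writing $P=(f-a_1)/(f-a_2)$, a direct computation shows that the bracket $(a_1'-a_2')(f-a_1)-(a_1-a_2)(f'-a_1')$ equals $-(f-a_1)(f-a_2)\,P'/P$, so that
\[\phi=-\frac{P'}{P}\bigl(f-\mathscr{L}_k(f(z+c))\bigr)=\Bigl(\frac{f'-a_1'}{f-a_1}-\frac{f'-a_2'}{f-a_2}\Bigr)\bigl(\mathscr{L}_k(f(z+c))-f\bigr).\]
Since $f$ is non-constant and $a_1\neq a_2$, the function $P$ is non-constant, hence $P'/P\not\equiv0$; therefore $\phi\equiv0$ if and only if $f\equiv\mathscr{L}_k(f(z+c))$, which is the first alternative. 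For the remainder I would assume $f\not\equiv\mathscr{L}_k(f(z+c))$, equivalently $\phi\not\equiv0$, and aim at the identity $\sum_{i=1}^k b_i\psi_i=-b_0$.

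The key reduction is to read off the meaning of the $\psi_i$. An induction on $\psi_{i+1}=\psi_i'+\psi_1\psi_i$ shows that $\psi_i=F^{(i)}/F$ whenever $F'/F=\psi_1$; concretely one may take $F(z)=\bigl(a_1(z+c)-a_2(z+c)\bigr)\exp\bigl(-\int \phi(z+c)/(a_1(z+c)-a_2(z+c))\,dz\bigr)$, using that $a_1'(z+c)-a_2'(z+c)$ is the $z$-derivative of $a_1(z+c)-a_2(z+c)$. Since $\psi_0=1$, this gives $\sum_{i=1}^k b_i\psi_i+b_0=\mathscr{L}_k(F)/F$, so the conclusion to be proved is exactly $\mathscr{L}_k(F)\equiv0$. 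The problem is thereby transformed into producing this differential relation from the IM sharing of $a_1,a_2$.

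To analyse $\phi$ I would next control its poles. Because $f$ and $\mathscr{L}_k(f(z+c))$ share $a_1,a_2$ IM, the factor $f-\mathscr{L}_k(f(z+c))$ vanishes at every $a_1$- and $a_2$-point of $f$ and cancels the simple poles of $P'/P$ there; the only surviving poles come from the zeros and poles of the small functions $a_1,a_2,b_0,\dots,b_k$, so that $N(r,\phi)=S(r,f)$. Here $\rho_1(f)<1$ is essential: it permits the difference analogues of the lemma on the logarithmic derivative (Halburd--Korhonen, Chiang--Feng) and the shift invariance $T(r,f(z+c))=T(r,f)+S(r,f)$, keeping every shifted quantity built from $f$, and in particular $\mathscr{L}_k(f(z+c))$, inside the Nevanlinna calculus with error $S(r,f)$. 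The hypothesis that $a_1,a_2$ be $c$-periodic or that $a_1'-a_2'$ be non-constant serves to keep $a_1(z+c)-a_2(z+c)$ and the exponent defining $F$ non-degenerate, so that $\psi_1$ is a genuine logarithmic derivative with small-function coefficients rather than a constant.

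The heart of the argument, and the step I expect to be the main obstacle, is to upgrade the pole estimate and the shared-value structure into the \emph{exact} identity $\mathscr{L}_k(F)\equiv0$. My plan is to apply the second main theorem for the three small targets $a_1,a_2,\infty$ of the entire function $f$, giving $T(r,f)\le\ol N(r,a_1;f)+\ol N(r,a_2;f)+S(r,f)$, together with the companion estimate for $\mathscr{L}_k(f(z+c))$, in order to force the shared $a_1$- and $a_2$-points to be simple with matching multiplicities outside a set counted by $S(r,f)$. A local expansion at such points yields $\phi=(\mathscr{L}_k(f(z+c)))'-f'$ at the $a_1$-points and its negative at the $a_2$-points, which pins down $\phi$ up to a function of growth $S(r,f)$; I would then assemble from $\phi$, $\psi_1$ and $g=\mathscr{L}_k(f(z+c))$ an auxiliary function of characteristic $S(r,f)$ whose vanishing is equivalent to $\mathscr{L}_k(F)\equiv0$, and prove that it vanishes identically. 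The delicate point throughout is the bookkeeping of multiplicities and small-function error terms so that these cancellations are exact, not merely valid modulo $S(r,f)$; making that final passage rigorous, rather than any single estimate, is where the real work lies.
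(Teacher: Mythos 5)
Your preliminary reductions are correct and partly coincide with the paper: the identity $\phi=\bigl(\tfrac{f'-a_1'}{f-a_1}-\tfrac{f'-a_2'}{f-a_2}\bigr)\bigl(\mathscr{L}_k(f(z+c))-f\bigr)$, the equivalence $\phi\equiv 0\Leftrightarrow f\equiv\mathscr{L}_k(f(z+c))$, the estimate $N(r,\phi)=S(r,f)$ from IM sharing, and the local values $\phi=g'-f'$ at simple shared $a_1$-points are all in the paper (the observation $\psi_i=F^{(i)}/F$ is a tidy reformulation the paper does not use). But the proposal stops exactly where the theorem begins. The route from ``$\phi\not\equiv 0$ and $T(r,\phi)=S(r,f)$'' to the identity $\sum_{i=1}^k b_i\psi_i=-b_0$ is left as a plan (``assemble an auxiliary function \dots and prove that it vanishes identically''), and the plan as stated gives no mechanism for producing that identity. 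The paper's mechanism is algebraic: solve (\ref{2}) for $f'$ as a quadratic in $f$ over $f-g$, iterate to get $f^{(i)}(z+c)=\bigl(\sum_{j\le 2i}\alpha_{i,j}f^j(z+c)+Q_i\bigr)/(f(z+c)-g(z+c))^{2i-1}$ with $\alpha_{i,2i}=\psi_i$, and hence express $g=\mathscr{L}_k(f(z+c))$ with leading coefficient $\gamma_{2k}=b_0+\sum b_i\psi_i$; if $\gamma_{2k}\not\equiv 0$ a degree count forces $T(r,f)=T(r,g)+S(r,f)$, and a separate hard uniqueness lemma (Lemma \ref{l4}, itself proved via the functions $H_{nm}$ and the counting functions $\ol N_{(m,n)}$) then yields $f\equiv g$, a contradiction; so $\gamma_{2k}\equiv 0$, which is the second alternative. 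Nothing in your sketch plays the role of either the induction or that lemma.

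A second, independent gap: you never isolate the degenerate case $\phi\equiv a_1'-a_2'$, in which $\psi_1\equiv 0$, all $\psi_i\equiv 0$, and the asserted conclusion would read $b_0\equiv 0$; this case must be excluded outright, and doing so is the longest part of the paper's proof (Sub-case 1.2, with Taylor expansions at points of $S_{(p,1)}(a_i)$ and the auxiliary functions $\varphi_1$, $\phi_1$, $G$, $H$). This is also precisely where the hypothesis ``$a_1,a_2$ are $c$-periodic or $a_1'-a_2'$ is non-constant'' is used --- it guarantees $a_1''\not\equiv a_2''$ so that multiple shared points can be absorbed into $N(r,0;a_1''-a_2'')=S(r,f)$ --- whereas you assign that hypothesis the cosmetic role of keeping $\psi_1$ ``non-degenerate.'' Relatedly, your assertion that the second main theorem ``forces the shared points to be simple with matching multiplicities outside $S(r,f)$'' is not a consequence of the second main theorem alone; establishing control of the multiplicity pairs $(p,q)$ is the delicate content of Sub-case 1.2 and of Case 2 of Lemma \ref{l4}, not a preliminary one can assume.
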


When $a_1$, $a_2$ and $b_0, b_1, \ldots, b_k$ are constant, we have the following.

\begin{cor}\label{c1} Let $f$ be a non-constant entire function with $\rho_1(f)<1$, $c\in\mathbb{C}\backslash \{0\}$ and $a_1, a_2\in\mathbb{C}$ be distinct. Let $b_0, b_1, \ldots, b_k\in\mathbb{C}$ such that $(b_0, b_1, \ldots, b_k)\neq (0,0,\ldots, 0)$ and $\phi$ be defined as in (\ref{2}).
If $f(z)$ and $\mathscr{L}_k(f(z+c))$ share $a_1, a_2$ IM, then either $f(z)\equiv \mathscr{L}_k(f(z+c))$ or $\sum_{i=1}^k b_i\psi_i=-b_0$,
where $\psi_1(z)=-\phi(z+c)/(a_1-a_2)$ is a constant and $\psi_{i+1}=\psi_i'+\psi_1 \psi_i,\;i=1,2,\ldots,k-1$.
\end{cor}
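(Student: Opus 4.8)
The plan is to deduce Corollary~\ref{c1} directly from Theorem~\ref{t1}, so that the only genuinely new content is the assertion that $\psi_1$ is a constant. First I would check that Theorem~\ref{t1} applies: the constants $a_1,a_2\in\mathbb{C}$ are trivially $c$-periodic, so the hypothesis ``$a_1,a_2$ are $c$-periodic or $a_1'-a_2'$ is non-constant'' is met, while $\rho_1(f)<1$, the distinctness of $a_1,a_2$, and the IM-sharing of $a_1,a_2$ by $f$ and $\mathscr{L}_k(f(z+c))$ are inherited verbatim. Since $a_1'=a_2'=0$, formula~(\ref{2}) and the definition of $\psi_1$ collapse to $\psi_1(z)=-\phi(z+c)/(a_1-a_2)$, exactly the expression in the statement. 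Theorem~\ref{t1} then yields the dichotomy: either $f(z)\equiv\mathscr{L}_k(f(z+c))$, or $\sum_{i=1}^k b_i\psi_i=-b_0$.

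It remains to prove that $\psi_1$ is constant. In the first alternative $f\equiv\mathscr{L}_k(f(z+c))$, the factor $f-\mathscr{L}_k(f(z+c))$ in (\ref{2}) vanishes identically, so $\phi\equiv0$ and hence $\psi_1\equiv0$; there is nothing more to do. The substantive case is the second alternative, and the key observation is that the recursion $\psi_{i+1}=\psi_i'+\psi_1\psi_i$ is precisely the rule governing logarithmic derivatives: if $u$ solves $u'/u=\psi_1$, then an easy induction gives $u^{(i)}/u=\psi_i$ for all $i$. Consequently the relation $\sum_{i=1}^k b_i\psi_i=-b_0$ is equivalent, after multiplication by $u$, to the identity $\mathscr{L}_k(u)=b_0u+b_1u'+\cdots+b_ku^{(k)}=0$.

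To exploit this I would first verify that $\phi$, and hence $\psi_1$, is entire. Since $f$ is entire and $\mathscr{L}_k(f(z+c))$ is a differential polynomial in a shift of $f$, the latter is entire as well, so the only candidate poles of $\phi$ sit at the $a_1$- and $a_2$-points of $f$. Because $a_1'=a_2'=0$, (\ref{2}) reduces to $\phi=-(a_1-a_2)f'\big(f-\mathscr{L}_k(f(z+c))\big)/\big((f-a_1)(f-a_2)\big)$. At an $a_1$-point of $f$ of order $p$, the factor $f'$ contributes a zero of order $p-1$ and, since IM-sharing forces $\mathscr{L}_k(f(z+c))=a_1$ there, $f-\mathscr{L}_k(f(z+c))$ contributes a further zero of order at least one; together these match the order-$p$ zero of $(f-a_1)(f-a_2)$, so $\phi$ is holomorphic at the point, and symmetrically at $a_2$-points. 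Hence $\psi_1$ is entire and $u=\exp\!\big(\int\psi_1\big)$ is a zero-free entire function satisfying the homogeneous linear equation $\mathscr{L}_k(u)=0$ with constant coefficients $b_0,\dots,b_k$ (not all zero). Every such equation of positive order has only finite-order solutions, namely finite sums $\sum_m P_m(z)e^{\lambda_m z}$ with $P_m$ polynomials; a zero-free one must reduce to a single term $u=Ce^{\lambda z}$ with $C\neq0$. Therefore $\psi_1=u'/u=\lambda$ is constant, which completes the proof.

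I expect the main obstacle to be this last step: the local multiplicity bookkeeping showing $\phi$ has no poles, and the invocation of the classical fact that a zero-free entire solution of a constant-coefficient homogeneous linear ODE is a pure exponential $Ce^{\lambda z}$ (for instance via Borel's theorem on the zeros of exponential sums, which rules out more than one exponential term). The degenerate possibility that only $b_0\neq0$, which would force $u\equiv0$, cannot arise in the second alternative and is harmless, since it merely returns the case $f\equiv\mathscr{L}_k(f(z+c))$.
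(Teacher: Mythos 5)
Your proposal is correct, and for the key step --- showing $\psi_1$ is constant --- it takes a genuinely different route from the paper. The paper does not invoke Theorem \ref{t1} as a black box but re-enters its proof (noting that Sub-case 1.2, where $\phi\equiv a_1'-a_2'$, collapses to $\phi\equiv 0$ for constant $a_1,a_2$), reduces to $\gamma_{2k}\equiv 0$, i.e.\ $\sum_{i=1}^k b_i\psi_i=-b_0$, and then unwinds the recurrence (\ref{e3.4}) into the differential-polynomial identity $b_k\psi_1^k=\tilde R_{k-1}(\psi_1)$ with $\deg \tilde R_{k-1}\le k-1$; it rules out transcendental $\psi_1$ by the Clunie lemma ($m(r,\psi_1)=S(r,\psi_1)$ plus $N(r,\psi_1)=0$ gives $T(r,\psi_1)=S(r,\psi_1)$) and rules out non-constant polynomial $\psi_1$ by degree comparison. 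You instead observe that the recurrence $\psi_{i+1}=\psi_i'+\psi_1\psi_i$ is exactly the recursion for $u^{(i)}/u$ when $u'/u=\psi_1$, so after checking $\phi$ (hence $\psi_1$) is entire by the local multiplicity count at shared $a_1$- and $a_2$-points, the relation becomes $\mathscr{L}_k(u)=0$ for the zero-free entire function $u=\exp\bigl(\int\psi_1\bigr)$, and the classical structure of solutions of constant-coefficient linear ODEs forces $u=Ce^{\lambda z}$ and $\psi_1=\lambda$. Your argument is more elementary (no Clunie lemma), handles uniformly the case $b_k=0$ (where the paper's isolation of $b_k\psi_1^k$ implicitly presumes $b_k\neq 0$ and would need to be restated with the top nonzero coefficient), and yields the extra information that $\lambda$ is a root of the characteristic polynomial $b_0+b_1\lambda+\cdots+b_k\lambda^k$; the paper's Nevanlinna-theoretic argument has the advantage of extending to the setting of Corollary \ref{c2}, where the coefficients are small functions rather than constants.
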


When $(b_0, b_1, b_2,\ldots, b_k)=(1,0,0,\ldots,0)$, we have the following.

\begin{cor}\label{c2} Let $f$ be a non-constant entire function with $\rho_1(f)<1$, $c\in\mathbb{C}\backslash \{0\}$ and $a_1, a_2\in S(f)$ be distinct such that either $a_1$ and $a_2$ are $c$-periodic or $a_1'-a'_2$ is non-constant. If $f(z)$ and $f(z+c)$ share $a_1, a_2$ IM, then $f(z)\equiv f(z+c)$.
\end{cor}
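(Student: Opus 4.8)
The plan is to obtain Corollary \ref{c2} as an immediate specialization of Theorem \ref{t1}, so that all of the analytic work is inherited from the theorem and only a bookkeeping step remains. The hypotheses of Corollary \ref{c2} on $f$, on $c$, and on the pair $a_1, a_2$ (the bound $\rho_1(f)<1$, the distinctness, and the $c$-periodicity-or-nonconstancy condition) are identical to those of Theorem \ref{t1}, so the only thing to arrange is the choice of differential polynomial. First I would set $(b_0, b_1, \ldots, b_k) = (1, 0, 0, \ldots, 0)$. Since $b_0 = 1 \in S(f)$ and the remaining coefficients vanish, this tuple is admissible (in particular $(b_0, b_1, \ldots, b_k)\not\equiv(0,0,\ldots,0)$), and the operator collapses to
\[
\mathscr{L}_k(f(z+c)) = b_0 f(z+c) + b_1 f'(z+c) + \cdots + b_k f^{(k)}(z+c) = f(z+c).
\]
Consequently the sharing hypothesis ``$f(z)$ and $f(z+c)$ share $a_1, a_2$ IM'' is precisely the sharing hypothesis of Theorem \ref{t1} for this operator, and $\phi$ in \eqref{2} reduces to the corresponding expression with $\mathscr{L}_k(f(z+c))$ replaced by $f(z+c)$.

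Next I would invoke Theorem \ref{t1}, which yields the dichotomy that either $f(z) \equiv \mathscr{L}_k(f(z+c)) = f(z+c)$, which is the desired conclusion, or the degenerate relation $\sum_{i=1}^k b_i\psi_i = -b_0$ holds. The key observation is that for the chosen coefficients the second alternative is untenable by a trivial coefficient count: the left-hand side $\sum_{i=1}^k b_i\psi_i$ vanishes identically because $b_1 = \cdots = b_k = 0$, whereas the right-hand side equals $-b_0 = -1$. Thus the relation would force $0 \equiv -1$, an absurdity, so the second branch cannot occur and only $f(z)\equiv f(z+c)$ survives.

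Because virtually all of the substance is already packaged inside Theorem \ref{t1}, I do not anticipate any genuine obstacle here; the corollary is a clean limiting case. The single point warranting a word of care is that the auxiliary functions $\psi_i$ and the quantity $\phi$ are still well defined in this situation, so one might worry they impose hidden constraints. However, they enter the conclusion of Theorem \ref{t1} \emph{only} through the relation $\sum_{i=1}^k b_i\psi_i = -b_0$, which we have just ruled out by the coefficient count, so their explicit form is entirely irrelevant to the argument. Hence the proof is complete.
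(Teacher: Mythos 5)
Your proof is correct, and it is the natural way to deduce Corollary \ref{c2} from Theorem \ref{t1}: with $(b_0,b_1,\ldots,b_k)=(1,0,\ldots,0)$ the operator collapses to $\mathscr{L}_k(f(z+c))=f(z+c)$, and the second alternative of the theorem reads $0=\sum_{i=1}^k b_i\psi_i\equiv -b_0=-1$, which is absurd, so only $f(z)\equiv f(z+c)$ survives. Interestingly, this is not the argument the paper prints under the heading ``Proof of Corollary \ref{c2}.'' The printed proof re-enters the proof of Theorem \ref{t1}, expresses $f^{(k)}(z+c)$ via (\ref{e3.3}), and splits into the cases $\psi_k\not\equiv 0$ (contradiction as in Sub-case 1.1) and $\psi_k\equiv 0$ (ruled out by integrating the recurrence (\ref{e3.4}) and invoking the Clunie lemma). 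That machinery is needed precisely when $b_0=0$ and $b_k\not\equiv 0$, i.e.\ for Corollary \ref{c3}, where the degenerate alternative becomes $\psi_k\equiv 0$ and is not vacuously false; the printed text appears to be the proof of Corollary \ref{c3} placed under the wrong heading. For the tuple $(1,0,\ldots,0)$ of Corollary \ref{c2} none of this is necessary, since $\gamma_{2k}=b_0=1\not\equiv 0$ holds identically, so Sub-case 1.1 of the theorem always terminates in a contradiction and the remaining branch forces $\phi\equiv 0$, hence $f\equiv g$. In short, your coefficient count buys a one-line deduction exactly where the statement permits it, whereas the paper's displayed argument carries the extra weight required for the harder sibling corollary; both are valid, but yours is the right level of generality for this statement.
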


When $(b_0, b_1, b_2,\ldots, b_k)=(0,0,0,\ldots,1)$, we have the following.
\begin{cor}\label{c3} Let $f$ be a non-constant entire function with $\rho_1(f)<1$, $c\in\mathbb{C}\backslash \{0\}$ and $a_1, a_2\in S(f)$ be distinct such that either $a_1$ and $a_2$ are $c$-periodic or $a_1'-a'_2$ is non-constant. If $f(z)$ and $f^{(k)}(z+c)$ share $a_1, a_2$ IM, then $f(z)\equiv f^{(k)}(z+c)$.
\end{cor}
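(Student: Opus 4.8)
The plan is to obtain Corollary~\ref{c3} as the specialization of Theorem~\ref{t1} to the coefficient vector $(b_0,b_1,\ldots,b_k)=(0,0,\ldots,0,1)$, for which $\mathscr{L}_k(f)=f^{(k)}$ and $\mathscr{L}_k(f(z+c))=f^{(k)}(z+c)$. With these coefficients the second alternative $\sum_{i=1}^{k}b_i\psi_i=-b_0$ of Theorem~\ref{t1} reduces to $\psi_k\equiv 0$, since $b_0=\cdots=b_{k-1}=0$ and $b_k=1$. Hence everything comes down to showing that, under the stated hypotheses, $\psi_k\equiv 0$ cannot occur together with $f\not\equiv f^{(k)}(z+c)$; the remaining alternative $f(z)\equiv f^{(k)}(z+c)$ is precisely the desired conclusion.

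First I would read the recursion $\psi_{i+1}=\psi_i'+\psi_1\psi_i$ through an auxiliary function $g$ defined by $g'/g=\psi_1$. A one-line induction gives $g^{(i)}=\psi_i\,g$ for all $i$, because $g^{(i+1)}=(\psi_i g)'=(\psi_i'+\psi_1\psi_i)g=\psi_{i+1}g$; consequently $\psi_k\equiv 0$ is equivalent to $g^{(k)}\equiv 0$, that is, $g$ is a polynomial of degree at most $k-1$. Returning this to $g'/g=\psi_1$ shows that $\psi_1$ is the logarithmic derivative of a polynomial, hence rational with only simple poles; when $g$ is moreover zero-free (as happens once $\psi_1$ is known to be entire) this forces $g$ to be constant and $\psi_1\equiv 0$. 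Recalling $\psi_1(z)=\bigl(a_1'(z+c)-a_2'(z+c)-\phi(z+c)\bigr)/\bigl(a_1(z+c)-a_2(z+c)\bigr)$, the condition reads $\phi=(a_1'-a_2')-\dfrac{g'(z-c)}{g(z-c)}\,(a_1-a_2)$, which collapses to $\phi\equiv a_1'-a_2'$ in the zero-free case $\psi_1\equiv 0$.

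Next I would simplify the defining quotient~(\ref{2}). A partial-fraction computation turns it into a difference of logarithmic derivatives,
\[\phi=\bigl(f-f^{(k)}(z+c)\bigr)\Bigl(\frac{f'-a_2'}{f-a_2}-\frac{f'-a_1'}{f-a_1}\Bigr).\]
Writing $F:=f^{(k)}(z+c)$ and $L$ for the bracketed factor (which is not identically zero since $a_1\neq a_2$ and $f$ is non-constant), observe that because $f$ and $F$ share $a_1,a_2$ IM, the simple poles of $L$ at the $a_1$- and $a_2$-points of $f$ are cancelled by the zeros of $f-F$, so $\phi$ is entire; the lemma on the logarithmic derivative together with the shift estimate $m\bigl(r,f(z+c)/f(z)\bigr)=S(r,f)$, valid since $\rho_1(f)<1$, then give $m(r,\phi)=S(r,f)$ and hence $\phi\in S(f)$. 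Assuming $\phi\not\equiv 0$ (otherwise $f\equiv F$ at once), the two expressions for $\phi$ combine to $f-F=\phi/L$, whence $T(r,f-F)=\overline N(r,a_1;f)+\overline N(r,a_2;f)+S(r,f)$.

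The hard part will be the final value-distribution balancing. From $f-F=\phi/L$ and the Second Main Theorem applied to $f$ with the small targets $a_1,a_2$ and $\infty$ (using that $f$ is entire), one gets $T(r,f)\le\overline N(r,a_1;f)+\overline N(r,a_2;f)+S(r,f)=T(r,f-F)+S(r,f)$, while the shared simple $a_1,a_2$-points account for essentially all zeros of $f-F$. I would then compare this with the growth of $F=f^{(k)}(z+c)$, controlled by $m(r,f^{(k)}/f)=S(r,f)$ and the difference estimates available for $\rho_1(f)<1$, and bring in the hypothesis that either $a_1,a_2$ are $c$-periodic or $a_1'-a_2'$ is non-constant in order to pin down the admissible form of the rational factor $g'(z-c)/g(z-c)$ against the zeros of $a_1-a_2$ (entirety of $\phi$ forces every zero of $g(z-c)$ to lie among the zeros of $a_1-a_2$). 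The goal is to show these constraints are incompatible with $f\not\equiv F$, forcing $f(z)\equiv f^{(k)}(z+c)$. This last inconsistency, rather than the algebraic reductions above, is where the real work lies, and it runs parallel to the decisive estimate in the proof of Theorem~\ref{t1}.
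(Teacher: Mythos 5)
Your reduction of Corollary \ref{c3} to Theorem \ref{t1} is the right one and is what the paper does: with $(b_0,\ldots,b_k)=(0,\ldots,0,1)$ the second alternative becomes $\psi_k\equiv 0$, and everything rests on excluding it. Your device $g^{(i)}=\psi_i\,g$ for $g=\exp\bigl(\int\psi_1\bigr)$ is an attractive way to read the recurrence (\ref{e3.4}) --- if it could be run to the end it would replace the paper's appeal to Clunie's lemma by the remark that a zero-free polynomial is constant. But the proposal does not close the argument, for two concrete reasons. First, the contradiction you need is with $\psi_1\not\equiv 0$, and you never secure or invoke that fact. The alternative $\sum_{i=1}^k b_i\psi_i=-b_0$ is produced only inside Sub-case 1.1 of the proof of Theorem \ref{t1}, i.e.\ under $\phi\not\equiv a_1'-a_2'$, which is precisely $\psi_1\not\equiv 0$; the paper's proof of the corollary opens by recording exactly this. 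Your chain ``$\psi_k\equiv 0\Rightarrow g$ is a polynomial of degree $\le k-1\Rightarrow g$ is constant $\Rightarrow\psi_1\equiv 0$'' therefore already terminates in a contradiction --- yet you instead treat the outcome $\phi\equiv a_1'-a_2'$ as a live case and defer it to a ``final value-distribution balancing'' which you describe only as a goal (``this is where the real work lies'') and never carry out. As written, the decisive step is missing, and the sketched replacement is not needed once one notes where the $\psi_k\equiv 0$ alternative comes from.

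Second, the implication ``$g$ is zero-free'' is conditional on $\psi_1$ being entire, which you assume rather than prove: $a_1,a_2\in S(f)$ may be meromorphic, $a_1-a_2$ may have zeros, and $\phi$ is only known to satisfy $N(r,\phi)=S(r,f)$, so $\psi_1$ can a priori have poles; moreover $\exp\bigl(\int\psi_1\bigr)$ is not even a single-valued meromorphic function unless the residues of $\psi_1$ are integers, so the equivalence ``$\psi_k\equiv 0\iff g^{(k)}\equiv 0$ with $g$ a genuine polynomial'' needs justification. The paper supplies the corresponding ingredient by integrating $\psi_{k-1}'+\psi_1\psi_{k-1}\equiv 0$ to $\psi_{k-1}=c_0e^{\xi}$ with $\xi'=-\psi_1$, arguing that meromorphy of $\psi_{k-1}$ forces $\xi$, hence $\psi_1$, to be entire, and then disposing of polynomial $\psi_1$ by the polynomial-versus-$e^{\xi}$ clash and of transcendental $\psi_1$ by Clunie's lemma applied to $\psi_1^k=\tilde R_{k-1}(\psi_1)$ as in (\ref{t2.2}). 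Until you either prove entirety of $\psi_1$ (or otherwise control its poles) and then cite $\psi_1\not\equiv 0$ from Sub-case 1.1, your argument has a genuine gap at its central step.
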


Following examples show that the condition ``$\rho _1(f)< 1$'' in Theorem \ref{t1} is necessary.

\begin{exm} Let $f(z) = e^{\sin z}$, $c= \pi$, $a_1(z)=1$, $a_2(z)=-1$, $b_0(z)=\frac{1}{4}$ and $b_1(z)=-\frac{3}{4\cos z}$. Clearly $\rho _1(f)=1$ and $\mathscr{L}_1(f(z+c))= b_0(z)f(z+c)+b_1(z)f'(z+c)= e^{-\sin z}$. Now from (\ref{2}), we see that $\phi(z)=-2\cos z$  and so $\psi_1(z)=(a'_1(z+c)-a'_2(z+c)-\phi(z+c))/(a_1(z+c)-a_2(z+c))=\cos z$, but $b_1(z)\psi_1(z)=-\frac{3}{4}\neq -b_0(z)$. Clearly $f(z)$ and $\mathscr{L}_1(f(z+c))$ share $1$ and $-1$ CM, but $f$ does not satisfy any case of Theorem \ref{t1}.
\end{exm}

\begin{exm} Let $f(z)=e^{e^z}$, $c=\pi i$, $a_1=1$ and $a_2(z)=-e^z$. Note that $\rho_1(f)=1$. Clearly $f(z)$ and $f'(z+c)$ share $a_1, a_2$ IM, but $f(z)\not\equiv f'(z+c)$.
\end{exm}

Following example asserts that condition ``$a_1, a_2(\not\equiv \infty)$'' is sharp in Theorem \ref{t1}.

\begin{exm} Let $f(z) = \sin z$, $c= \pi$, $a_1(z)=0$ and $a_2(z)=\infty$. Clearly $\rho_1(f)=0<1$ and $f(z)$ and $f(z+c)$ share $a_1$ and $a_2$, but $f(z)\not\equiv f(z+c)$.
\end{exm}

Following example shows that Theorem \ref{t1} does not hold for non-constant meromorphic function $f$ with $N(r, f)\neq S(r, f)$.

\begin{exm} Let $f(z)= \frac{e^z+1}{e^{z}-1}$, $e^c=-1$, $a_1= 1$ and $a_2= -1$. Note that $\rho_1(f)=0<1$ and $N(r,f)\neq S(r,f)$. Clearly $f(z)$ and $f(z+c)$ share $a_1$ and $a_2$ CM, but $f(z)\not\equiv f(z+c)$. 
\end{exm}

\section {{\bf Auxiliary lemmas}}
In the proof of Lemma \ref{l4} below, we make use of three key lemmas. For the convenience of the reader, we recall these lemmas here.

\begin{lem}\label{l1}\cite{LY} If $f$ and $g$ are non-constant meromorphic functions, then 
\[N\big(r,f/g\big)-N\big(r,g/f\big)=N(r,f)+N(r,0;g)-N(r,g)-N(r,0;f).\]
\end{lem}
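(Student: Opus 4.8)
The plan is to prove the identity at the level of divisors (pointwise orders) and then integrate. Fix a point $z_0 \in \mathbb{C}$ and write $p = \mathrm{ord}_{z_0}(f)$ and $q = \mathrm{ord}_{z_0}(g)$ for the signed integer orders of $f$ and $g$ at $z_0$, with the convention that the order is positive at a zero and negative at a pole. Since $\mathrm{ord}_{z_0}(f/g) = p - q$ and $\mathrm{ord}_{z_0}(g/f) = q - p$, the multiplicity with which $z_0$ is counted in $n(r, f/g)$ (respectively $n(r, g/f)$) equals exactly $\max(0,\, q - p)$ (respectively $\max(0,\, p - q)$), because the pole multiplicity of a meromorphic function at a point is the negative part of its order there.

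The crux is the elementary identity $\max(0, t) - \max(0, -t) = t$, valid for every integer $t$. Applying it with $t = q - p$ gives, at each $z_0$,
\[\max(0,\, q - p) - \max(0,\, p - q) = q - p.\]
On the right I would split $q = \max(0, q) - \max(0, -q)$ and $p = \max(0, p) - \max(0, -p)$, where $\max(0, q)$ is the zero multiplicity of $g$ at $z_0$ and $\max(0, -q)$ its pole multiplicity, and similarly for $f$. This rewrites the local contribution to $n(r, f/g) - n(r, g/f)$ at $z_0$ as
\[\big(\text{zero-mult. of } g\big) - \big(\text{pole-mult. of } g\big) - \big(\text{zero-mult. of } f\big) + \big(\text{pole-mult. of } f\big).\]

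Summing this local identity over all $z_0$ with $|z_0| \le r$ yields the relation between the unintegrated counting functions
\[n(r, f/g) - n(r, g/f) = n(r, f) + n(r, 0; g) - n(r, g) - n(r, 0; f),\]
and integrating against $dt/t$ from $0$ to $r$ (together with the standard term arising from the order at the origin, which obeys the same bookkeeping) converts each $n$ into the corresponding $N$, giving precisely the asserted identity.

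The step I expect to demand the most care is not conceptual but notational: keeping the sign conventions for orders consistent so that poles of $f/g$, which may come either from zeros of $g$ or from poles of $f$, are correctly attributed on the right-hand side, and verifying that the contribution at $z = 0$ survives the passage from $n$ to $N$ without an off-by-one error. Once the pointwise identity $\max(0, q-p) - \max(0, p-q) = q - p$ and the signed-order decomposition are fixed, the remaining summation and integration are routine.
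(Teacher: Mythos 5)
Your argument is correct: the identity is purely a statement about divisors, and the local computation $\max(0,q-p)-\max(0,p-q)=q-p$ together with the splitting of $p$ and $q$ into zero and pole multiplicities gives exactly the right-hand side after summing and integrating (the origin term causes no trouble since the pointwise identity holds there as well). The paper offers no proof of this lemma, citing it from L.\ Yang's \emph{Value Distribution Theory}, and your divisor-level argument is the standard one (an equivalent route is to apply Jensen's formula to $f/g$, $f$ and $g$ separately), so there is nothing to add.
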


\begin{lem}\label{l2}\cite{HKT} Let $f$ be a non-constant meromorphic function with $\rho_1(f)<1$ and $c\in\mathbb{C}\backslash \{0\}$. Then
\[m\left(r,\frac{f(z+c)}{f(z)}\right)+m\left(r,\frac{f(z)}{f(z+c)}\right)=S(r,f)\]
for all $r$ outside of a possible exceptional set $E$ with finite logarithmic measure.
\end{lem}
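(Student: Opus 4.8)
The plan is to reduce the symmetric two–term statement to the one–sided difference analogue of the lemma on the logarithmic derivative. The central input, which for hyper–order $\rho_1(f)<1$ is supplied by the estimate of Halburd, Korhonen and Tohge \cite{HKT}, is that for every fixed $d\in\mathbb{C}$ one has
\[
m\left(r,\frac{f(z+d)}{f(z)}\right)=o\!\left(\frac{T(r,f)}{r^{1-\rho_1(f)-\varepsilon}}\right)=S(r,f)
\]
for all $r$ outside a set of finite logarithmic measure, where $\varepsilon>0$ is chosen small enough that $\rho_1(f)+\varepsilon<1$ (so the exponent $1-\rho_1(f)-\varepsilon$ is positive and the right–hand side is genuinely $o(T(r,f))$). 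Taking $d=c$ yields the first summand $m(r,f(z+c)/f(z))=S(r,f)$ at once.

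For the second summand I would pass to the shifted function $g(z):=f(z+c)$. First I would record that a shift does not change the hyper–order, so $\rho_1(g)=\rho_1(f)<1$, and that the Nevanlinna characteristic is shift–invariant up to a small error, $T(r,g)=T(r,f(z+c))=T(r,f)+S(r,f)$; both facts hold for $\rho_1(f)<1$ and are established in \cite{HKT}. Since $f(z)/f(z+c)=g(z-c)/g(z)$, applying the central estimate to $g$ with the backward shift $d=-c$ gives
\[
m\left(r,\frac{f(z)}{f(z+c)}\right)=m\left(r,\frac{g(z-c)}{g(z)}\right)=S(r,g)=S(r,f),
\]
where the last equality uses $T(r,g)=T(r,f)+S(r,f)$ to transfer the small–function symbol from $g$ back to $f$ (a quantity that is $o(T(r,g))$ is $o(T(r,f)+S(r,f))=o(T(r,f))$). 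Adding the two estimates proves the claim, the only bookkeeping point being that each one–sided estimate holds outside its own set of finite logarithmic measure, and the union $E$ of two such sets again has finite logarithmic measure, which is exactly the exceptional set in the statement.

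The genuine content, and hence the main obstacle, lies entirely in the central one–sided estimate rather than in the symmetrisation above. Proving that estimate from scratch is the hard part: one bounds $\log\bigl|f(z+c)/f(z)\bigr|$ pointwise on $|z|=r$ by means of the Poisson–Jensen formula, which expresses this quantity through a boundary integral of $\log|f|$ over a slightly larger circle together with sums over the zeros and poles of $f$ in that disc, and then integrates over $|z|=r$ and controls the resulting error terms. For finite order a Borel–type growth lemma suffices, but under the weaker hypothesis $\rho_1(f)<1$ one needs the refined growth lemma of \cite{HKT}, which is precisely what produces the factor $r^{-(1-\rho_1(f)-\varepsilon)}$ and confines the failure to a set of finite logarithmic measure. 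If one is content to cite that estimate and the accompanying shift–invariance of $T$ from \cite{HKT}, the remainder of the argument is the short, routine assembly described above.
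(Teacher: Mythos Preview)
The paper does not prove this lemma at all; it merely quotes it from \cite{HKT} as a known auxiliary result, so there is no ``paper's own proof'' to compare against. Your reduction of the two-sided estimate to two applications of the one-sided difference logarithmic-derivative lemma from \cite{HKT} (once with shift $c$, once with shift $-c$ applied to $g=f(\cdot+c)$), together with the shift-invariance $T(r,f(z+c))=T(r,f)+S(r,f)$ to convert $S(r,g)$ into $S(r,f)$, is correct and is the standard way to obtain the symmetric form; the bookkeeping about unions of exceptional sets of finite logarithmic measure is also fine.
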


Combining the lemma of logarithmic derivatives with Lemma \ref{l2}, the key result for meromorphic function $f$ with $\rho_1(f)<1$ is easily obtained
\bea\label{sm} m\left(r, \frac{f^{(k)}(z+c)}{f(z)}\right)\leq m\left(r,\frac{f^{(k)}(z+c)}{f(z+c)}\right)+m\left(r,\frac{f(z+c)}{f(z)}\right)=S(r,f)\eea
for all $r$ outside of a possible exceptional set $E$ with finite logarithmic measure, where $k\in\mathbb{N}$.

\begin{lem}\label{l4}\cite{HYX} Let $f$ be a non-constant meromorphic function with $\rho_1(f)<1$ and $c\in\mathbb{C}\backslash \{0\}$. Then $T(r,f(z+c))=T(r,f)+S(r,f)$ and $N(r,f(z+c))=N(r,f)+S(r,f).$ 
\end{lem}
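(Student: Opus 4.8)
The plan is to establish the lemma through the Nevanlinna decomposition $T(r,f(z+c))=m(r,f(z+c))+N(r,f(z+c))$, treating the proximity and counting pieces separately. Once both pieces are in hand the statement for $T$ follows by addition, and the statement for $N$ \emph{is} one of the two pieces, so these two estimates carry all the content. Throughout I would keep in mind that every error term is meant to hold for $r$ outside a set of finite logarithmic measure, consistent with the $S(r,f)$ convention used for Lemma \ref{l2}.

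For the proximity function I would exploit the factorizations $f(z+c)=f(z)\cdot\frac{f(z+c)}{f(z)}$ and $f(z)=f(z+c)\cdot\frac{f(z)}{f(z+c)}$ together with the subadditivity of $m(r,\cdot)$ on products, giving
\[m(r,f(z+c))\le m(r,f)+m\left(r,\frac{f(z+c)}{f(z)}\right),\qquad m(r,f)\le m(r,f(z+c))+m\left(r,\frac{f(z)}{f(z+c)}\right).\]
By Lemma \ref{l2} both mixed terms are $S(r,f)$, so the two inequalities combine to yield $m(r,f(z+c))=m(r,f)+S(r,f)$.

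For the counting function I would argue geometrically: a pole of $f(z+c)$ at $z_0$ is exactly a pole of $f$ at $z_0+c$, so the poles of $f(z+c)$ in $\{|z|\le r\}$ are the poles of $f$ in the translated disk $\{|w-c|\le r\}$. From the inclusions $\{|w|\le r-|c|\}\subseteq\{|w-c|\le r\}\subseteq\{|w|\le r+|c|\}$ one gets $n(r-|c|,f)\le n(r,f(z+c))\le n(r+|c|,f)$, and integrating (the $n(0,\cdot)\log r$ and constant terms contributing only $O(\log r)=S(r,f)$) produces the sandwich $N(r-|c|,f)+O(\log r)\le N(r,f(z+c))\le N(r+|c|,f)+O(\log r)$. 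It then remains to show that a bounded shift of the radius changes the counting function only by $S(r,f)$.

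This last point is exactly where the hypothesis $\rho_1(f)<1$ is used, and it is the main obstacle. I would invoke the standard growth estimate for functions of hyper-order below one, namely that a non-decreasing characteristic satisfying $\limsup_{r\to\infty}\frac{\log\log T(r)}{\log r}<1$ obeys $T(r+s)=T(r)+S(r)$ for every fixed $s>0$ outside a set of finite logarithmic measure (the same circle of estimates underlying Lemma \ref{l2}). Applying this to $N(r,f)$, via $N\le T$, turns the radius shifts $r\pm|c|$ into errors of size $S(r,f)$, so the sandwich collapses to $N(r,f(z+c))=N(r,f)+S(r,f)$. Adding the proximity estimate then gives $T(r,f(z+c))=T(r,f)+S(r,f)$. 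The only bookkeeping required is to merge the finitely many exceptional sets arising from Lemma \ref{l2} and from the growth estimate, each of finite logarithmic measure, into a single admissible exceptional set for the final error terms.
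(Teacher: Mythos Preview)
The paper does not prove this statement; it is simply cited from \cite{HYX} as a known result, so there is no in-paper argument to compare against. (The paper accidentally reuses the same label for this cited lemma and for the subsequent original lemma; it is the latter that carries a proof.)

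Your outline is nonetheless the standard route to such a result and is essentially sound. The proximity-function half is exactly an application of Lemma~\ref{l2} as you describe. For the counting half, the pole-translation sandwich on $n(\cdot)$ and the reduction to showing $N(r\pm|c|,f)=N(r,f)+S(r,f)$ are correct, and the growth lemma you invoke (for a non-decreasing $\varphi$ with $\limsup_{r}\frac{\log^{+}\log^{+}\varphi(r)}{\log r}<1$ one has $\varphi(r+s)=\varphi(r)\bigl(1+o(1)\bigr)$ off a set of finite logarithmic measure) is precisely the Halburd--Korhonen--Tohge estimate from \cite{HKT} that also underpins Lemma~\ref{l2}; the inequality $N\le T$ indeed pushes the hyper-order bound from $T(r,f)$ down to $N(r,f)$. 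Two small points to tighten: first, the passage from $n(t-|c|,f)\le n(t,f(z+c))\le n(t+|c|,f)$ to the analogous inequality for $N$ is not quite a one-liner, since after the substitution $s=t\mp|c|$ the denominator in the integrand becomes $s\pm|c|$ rather than $s$, and an extra line is needed to absorb that discrepancy into the error term; second, the assertion ``$O(\log r)=S(r,f)$'' presupposes that $f$ is transcendental, so the (trivial) rational case should be split off before that step.
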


We define $g(z)=\mathscr{L}_k(f(z+c))$ and the following auxiliary functions
\bea\label{3} \psi=\frac{\delta(g) (f-g)}{(g-a_1)(g-a_2)},\eea
where $\delta(g)=(a_1'-a_2')(g-a_1)-(a_1-a_2)(g'-a_1')=(a_1'-a_2')(g-a_2)-(a_1-a_2)(g'-a_2')$,
\beas\label{3a} H_{nm}=n\phi-m\psi\;\;\text{and}\;\;H=\frac{\delta(f)}{(f-a_1)(f-a_2)}-\frac{\delta(g)}{(g-a_1)(g-a_2)},\;\text{where}\;m,n\in\mathbb{N}.\eeas

\begin{lem}\label{l4} Let $f$ be a non-constant entire function with $\rho_1(f)<1$, $c\in\mathbb{C}\backslash \{0\}$ and $a_1, a_2\in S(f)$ be distinct. If $f$ and $g$ share $a_1, a_2$ IM and $T(r,f)=T(r,g)+S(r,f)$,
then $f\equiv g$.
\end{lem}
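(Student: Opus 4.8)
The plan is to analyse the single auxiliary function $H$ defined above, which measures the failure of $f$ and $g$ to be M\"obius--equivalent across $a_1$ and $a_2$. First I would settle the bookkeeping. Since $g=\mathscr L_k(f(z+c))=\sum_{j=0}^k b_jf^{(j)}(z+c)$ with $f$ entire and $b_j\in S(f)$, every pole of $g$ comes from a pole of some $b_j$, so $N(r,g)=S(r,f)$; combined with the hypothesis $T(r,f)=T(r,g)+S(r,f)$ this gives $S(r,g)=S(r,f)$. Next I would rewrite
\be
\frac{\delta(f)}{(f-a_1)(f-a_2)}=\frac{f'-a_2'}{f-a_2}-\frac{f'-a_1'}{f-a_1},
\ee
and the analogous identity for $g$, exhibiting both quotients as differences of logarithmic derivatives. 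Hence $m(r,H)=S(r,f)$, while their poles are simple and sit at the $a_1$-- and $a_2$--points of $f$ (resp. of $g$); since $f$ and $g$ share $a_1,a_2$ IM, the residues at a common $a_i$--point cancel unless the multiplicities of $f$ and $g$ there differ. Thus, up to $S(r,f)$, $N(r,H)$ counts (reduced) exactly the common $a_i$--points at which $f$ and $g$ attain $a_i$ with unequal multiplicity. I would also record the integrated form $H=\big(\log\frac{(f-a_2)(g-a_1)}{(f-a_1)(g-a_2)}\big)'$.

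The second step is a dichotomy on $H$. Because $f$ is entire and $N(r,g)=S(r,f)$, the second main theorem for the three small targets $a_1,a_2,\infty$ yields
\be
T(r,f)\le\ol N(r,a_1;f)+\ol N(r,a_2;f)+S(r,f),
\ee
and, by IM sharing together with $N(r,g)=S(r,f)$, the same bound with the same right--hand side for $g$. If $H\not\equiv0$, then the common $a_i$--points of unequal multiplicity contribute a nonnegligible term to $N(r,H)$; tracking how such points enter the two estimates produces a strict deficit in the one attached to the function of smaller local multiplicity, which is incompatible with their common right--hand side and with the equality $T(r,f)=T(r,g)+S(r,f)$. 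This forces $H\equiv0$.

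From $H\equiv0$ the integrated form gives $\frac{(f-a_2)(g-a_1)}{(f-a_1)(g-a_2)}\equiv C$ for a nonzero constant $C$, that is, $\frac{g-a_1}{g-a_2}=C\,\frac{f-a_1}{f-a_2}$. If $C=1$ this collapses to $(a_1-a_2)(f-g)\equiv0$, whence $f\equiv g$, as required. The heart of the matter is therefore to rule out $C\ne1$. Solving the relation for $g$ displays $g$ as a M\"obius transform of $f$ with coefficients in $S(f)$ whose pole divisor is, up to $S(r,f)$, the set of $w$--points of $f$, where $w=(a_2-Ca_1)/(1-C)\in S(f)$ is distinct from $a_1$ and $a_2$. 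Since $N(r,g)=S(r,f)$, this forces $\ol N(r,w;f)=S(r,f)$, so the entire function $f$ would omit (up to $S(r,f)$) the finite small function $w$ in addition to $\infty$.

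I expect this exclusion of $C\ne1$ to be the main obstacle, and it is exactly where $\rho_1(f)<1$ is indispensable. From $\ol N(r,w;f)=S(r,f)$ one obtains $(f-w)'/(f-w)\in S(f)$, so $f-w$ has a small logarithmic derivative; substituting this into $g=\mathscr L_k(f(z+c))$ and invoking Lemma \ref{l2} together with the estimate (\ref{sm}) — both valid precisely because $\rho_1(f)<1$ — to replace $f(z+c)$ and its derivatives by $f$ modulo $S(r,f)$, and then matching the outcome against $\frac{g-a_1}{g-a_2}=C\,\frac{f-a_1}{f-a_2}$, one sees that a shift--difference relation of hyperorder $<1$ cannot support a second omitted small function unless $C=1$. (That $\rho_1(f)<1$ cannot be relaxed is confirmed by the examples with $f=e^{\sin z}$ and $f=e^{e^z}$, where $\rho_1(f)=1$ and $C\ne1$ genuinely occurs.) This contradiction leaves only $C=1$, hence $f\equiv g$.
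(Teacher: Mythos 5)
Your overall architecture (an auxiliary logarithmic-derivative function, the second main theorem for small functions, integration to a M\"obius relation, and a final application of the second main theorem to kill the extra omitted value) is the right flavour, but the central step --- ``this forces $H\equiv 0$'' --- is asserted rather than proved, and I do not believe the sketched reasoning can be made to work. You correctly observe that $m(r,H)=S(r,f)$ and that $N(r,H)$ is supported (up to $S(r,f)$) on shared $a_i$-points where $f$ and $g$ take $a_i$ with unequal multiplicities. But nothing you have established gives an \emph{upper} bound on $T(r,H)$, so $H\not\equiv 0$ with $N(r,H)$ large is not yet a contradiction; and the two second-main-theorem estimates you propose to play off against each other involve only \emph{reduced} counting functions, which are blind to the local multiplicities, so there is no ``strict deficit'' to extract. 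Concretely, a configuration in which $f$ is simple and $g$ is double at every $a_1$-point while $f$ is double and $g$ is simple at every $a_2$-point is consistent with $T(r,f)=T(r,g)+S(r,f)$, with both SMT bounds, and with $H\not\equiv 0$. This is precisely why the paper does not use your $H$ in this lemma: it works with $\phi=\delta(f)(f-g)/((f-a_1)(f-a_2))$ and $\psi=\delta(g)(f-g)/((g-a_1)(g-a_2))$ --- note the extra factor $f-g$, so that $H_{nm}=n\phi-m\psi$ actually \emph{vanishes} (rather than merely stays finite) at a shared point of multiplicity pair $(m,n)$ --- first proves $T(r,\phi)+T(r,\psi)=S(r,f)$ (itself a nontrivial step requiring the auxiliary target $a_3=a_1+l(a_1-a_2)$, the estimates $m(r,a_3;f)=m(r,a_3;g)=S(r,f)$, and Lemma \ref{l1}), and then runs the dichotomy separately for each pair $(n,m)$: either $H_{nm}\equiv 0$, which integrates to $((f-a_1)/(f-a_2))^n\equiv c_1((g-a_1)/(g-a_2))^m$ and is handled by Mohon'ko plus the four-target SMT, or else all $(m,n)$-points are negligible; the remaining points with $m+n\ge 5$ are absorbed by the $\tfrac{4}{5}T(r,f)$ counting estimate. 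Your single function $H=H_{11}/(f-g)$ cannot resolve the multiplicity pairs individually, which is where the real work lies.

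Two further gaps. First, your exclusion of $C\ne 1$ rests on the claim that ``a shift--difference relation of hyperorder $<1$ cannot support a second omitted small function,'' which is not a theorem and is not where $\rho_1(f)<1$ enters; the correct and short argument is the one in the paper: from $N(r,a_4;f)=S(r,f)$ (where $a_4=(a_1-a_2c_2)/(1-c_2)$) and the second main theorem for the four small targets $a_1,a_2,a_4,\infty$ one gets $2T(r,f)\le \ol N(r,a_1;f)+\ol N(r,a_2;f)+S(r,f)$, which contradicts the \emph{equality} $\ol N(r,a_1;f)+\ol N(r,a_2;f)=T(r,f)+S(r,f)$. Second, you never establish that equality: you only quote the SMT inequality $T(r,f)\le \ol N(r,a_1;f)+\ol N(r,a_2;f)+S(r,f)$. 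The reverse inequality comes from the observation that every shared $a_i$-point is a zero of $f-g$ together with $T(r,f-g)=m(r,f-g)\le T(r,f)+S(r,f)$, which uses the estimate (\ref{sm}) and hence the hypothesis $\rho_1(f)<1$; this is the point at which that hypothesis is actually consumed in the lemma.
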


\begin{proof} If $f\equiv g$, then the proof is trivial. In the following, we assume that $f\not\equiv g$.
Using second fundamental theorem for small functions (see \cite{KY}), we get $T(r,f)\leq \ol N(r,a_1;f)+\ol N(r,a_2;f)+S(r,f)$. 
Note that $\sum_{i=1}^2\ol N(r,a_i;f)\leq N(r,0;f-g)$ and so from (\ref{sm}), we get
\bea\label{rm} \sideset{}{_{i=1}^2}{\sum}\ol N(r,a_i;f)\leq T(r,f-g)+S(r,f)
\leq m(r,f-g)+S(r,f)\leq T(r,f)+S(r,f).\eea

Therefore
\bea\label{1} T(r,f)= \ol N(r,a_1;f)+\ol N(r,a_2;f)+S(r,f).\eea

If possible, suppose $\delta\left(g\right)\equiv 0$. Then $\frac{g'-a_1'}{g-a_1}\equiv \frac{a_1'-a_2'}{a_1-a_2}$ and so
$g\equiv a_1+c_0(a_1-a_2)$, where $c_0\in\mathbb{C}\backslash \{0\}$. Since $T(r,f)=T(r,g)+S(r,f)$, we have $T(r,f)=S(r,f)$, which is a contradiction. Therefore $\delta\left(g\right)\not\equiv 0$. Similarly $\delta(f)\not\equiv 0$. Consequently $\phi\not\equiv 0$ and $\psi\not\equiv 0$. Also by the given condition, it is easy to get $N(r,\phi)=S(r,f)$. Note that 
\beas m\left(r,\frac{\delta(f)}{(f-a_1)(f-a_2)}\right)&=&m\left(r,\frac{1}{a_1-a_2}\left(\frac{\delta(f)}{f-a_1}-\frac{\delta(f)}{f-a_2}\right)\right)\leq S(r,f).\eeas

On the other hand, we have
\beas \frac{\delta(f)f}{(f-a_1)(f-a_2)}=\frac{\delta(f)}{f-a_1}+\frac{a_2\delta(f)}{(f-a_1)(f-a_2)}\;\;
\text{and so}\;\; m\left(r,\frac{\delta(f)f}{(f-a_1)(f-a_2)}\right)=S(r,f).\eeas

Clearly 
\bea\label{bb.1} \phi=\frac{\delta(f)f}{(f-a_1)(f-a_2)}\left(1-\frac{g}{f}\right)\eea 
and so from (\ref{sm}), we get $m(r,\phi)=S(r,f)$. Therefore $T(r,\phi)=S(r,f)$. Again from (\ref{bb.1}), we get $m(r,0;f)=S(r,f)$.
Let $a_3=a_1+l(a_1-a_2)$, where $l\in\mathbb{N}$.
Then in view of (\ref{1}) and using second fundamental theorem for small functions (see \cite{KY}), we get
\beas 2T(r,f)\leq \ol N(r,a_1;f)+\ol N(r,a_2;f)+\ol N(r,a_3;f)+S(r,f)
\leq 2T(r,f)-m(r,a_3;f)+S(r,f),\eeas
i.e., $m(r,a_3;f)=S(r,f)$. Therefore
\bea\label{5} m(r,0;f)+m(r,a_3;f)=S(r,f).\eea

Also in view of (\ref{1}) and using second fundamental theorem for small functions (see \cite{KY}), we get
\beas 2T(r,g)&\leq&\ol N(r,a_1;g)+\ol N(r,a_2;g)+\ol N(r,a_3;g)+S(r,f)\\
&=& \ol N(r,a_1;f)+\ol N(r,a_2;f)+T(r,g)
-m(r,a_3;g)+S(r,f)\\
&=& T(r,f)+T(r,g)-m(r,a_3;g)+S(r,f)=2T(r,g)-m(r,a_3;g)+S(r,f),\eeas
from which we get
\bea\label{8} m(r,a_3;g)=S(r,f).\eea

Now from (\ref{sm}) and (\ref{5}), we get
\beas m\left(r,\frac{g-a_3}{f-a_3}\right)\leq m\left(r,\frac{\mathscr{L}_k (f(z+c)-a_3(z+c))}{f-a_3}\right)+m(r,a_3;f)+S(r,f)=S(r,f),\eeas
i.e.,
\bea \label{9} m\left(r,\frac{g-a_3}{f-a_3}\right)=S(r,f).\eea

Noting that $T(r,f)=T(r,g)+S(r,f)$ and so using Lemma \ref{l1}, (\ref{5})-(\ref{9}), we get
\bea m\left(r,\frac{f-a_3}{g-a_3}\right)&=& T\left(r,\frac{f-a_3}{g-a_3}\right)-N\left(r,\frac{f-a_3}{g-a_3}\right)\nonumber\\
&=& T\left (r,\frac{g-a_3}{f-a_3}\right)-N\left(r,\frac{f-a_3}{g-a_3}\right)+O(1)\nonumber\\
&=& N\left(r,\frac{g-a_3}{f-a_3}\right)+m\left(r,\frac{g-a_3}{f-a_3}\right)-N\left(r,\frac{f-a_3}{g-a_3}\right)+O(1)\nonumber\\
&=& N\left(r,\frac{g-a_3}{f-a_3}\right)-N\left(r,\frac{f-a_3}{g-a_3}\right)+S(r,f)\nonumber\\
&=& N(r,a_3;f)-N(r,a_3;g)+S(r,f)
=T(r,f)-T(r,g)+S(r,f)=S(r,f),\nonumber\eea
i.e.,
\bea\label{10} m\Big(r,\frac{f-a_3}{g-a_3}\Big)=S(r,f).\eea

Note that
\beas \frac{\delta(g)(g-\alpha)}{(g-a_1)(g-a_2)}=\frac{\delta(g)}{g-a_1}+\frac{(a_2-\alpha)\delta(g)}{(g-a_1)(g-a_2)},\eeas
where $\alpha$ is an arbitrary small function of $f$ and so
\bea\label{x0} m\left(r,\frac{\delta(g)(g-\alpha)}{(g-a_1)(g-a_2)}\right)=S(r,f).\eea

Then from (\ref{10}) and (\ref{x0}), we have
\beas m(r,\psi)&\leq& m\left(r,\frac{\delta(g)(g-a_3)}{(g-a_1)(g-a_2)}\right)+m\left(r,\frac{f-a_3}{g-a_3}\right)+O(1)
\leq S(r,f),\eeas
i.e., $m(r,\psi)=S(r,f)$. Since $f$ and $g$ share $a_1$ and $a_2$ IM, we get $N(r,\psi)=S(r,f)$ and so $T(r,\psi)=S(r,f)$.
We now consider following two cases.\par

{\bf Case 1.} Let $H_{nm}\equiv 0$. Then (\ref{2}) and (\ref{3}) give
$n\big(\frac{f'-a_1'}{f-a_1}-\frac{f-a_2'}{f-a_2}\big)\equiv m\big(\frac{g'-a_1'}{g-a_1}-\frac{g'-a_2'}{g-a_2}\big)$.
On integration, we get $\big(\frac{f-a_1}{f-a_2}\big)^n\equiv c_1\big(\frac{g-a_1}{g-a_2}\big)^m$, $c_1\in\mathbb{C}\backslash \{0\}$. If $n\neq m$, then by Mohon'ko lemma \cite{AZM}, we get
$n\;T(r,f)=m\;T(r,g)+S(r,f)$, which is impossible as $T(r,f)=T(r,g)+S(r,f)$. Hence $n=m$ and so
$\frac{f-a_1}{f-a_2}\equiv c_2 \frac{g-a_1}{g-a_2}$,
where $c_2\in\mathbb{C}\backslash \{0\}$. Since $f\not\equiv g$, we have $c_2\neq 1$ and so $\frac{1-c_2}{c_2}\frac{f-a_4}{f-a_2}\equiv \frac{a_2-a_1}{g-a_2}$,
where $a_4=\frac{a_1-a_2c_2}{1-c_2}$ such that $a_4\not\equiv a_1, a_2$. Since $f$ is a non-constant entire function and $f$, $g$ share $a_2$ IM, we get $N(r,a_4;f)=S(r,f)$. Also we have $F+\frac{c_2}{1-c_2}=\frac{f-a_4}{a_2-a_1}$, where $F=\frac{f-a_1}{a_2-a_1}.$ Then in view of the second fundamental theorem and using (\ref{1}), we get
\beas 2T(r,f)&\leq& \ol N(r,\infty,F) + \ol N(r,0,F)+\ol N(r,1;F)+\ol N\left(r,-\frac{c_2}{1-c_2};F\right)+S(r,f)\\
&=& \ol N(r,a_1;f)+\ol N(r,a_2;f)+S(r,f)=T(r,f)+S(r,f),\eeas 
which is impossible.\par

{\bf Case 2.} Let $H_{nm}\not\equiv 0$ for all $m, n\in\mathbb{N}$. 
Let $z_{m,n}\in S_{(m,n)}(a_1)\cup S_{(m,n)}(a_2)$ such that $a_1(z_{m,n}), a_2(z_{m,n})\not= 0, \infty$ and $a_1(z_{m,n})-a_2(z_{m,n})\neq 0$. 
Now from (\ref{2}) and (\ref{3}), we get
\bea\label{15} H_{nm}=(f-g)\left\lbrack \left(n\frac{f'-a_2'}{f-a_2}-m\frac{g'-a_2'}{g-a_2}\right)-\left(n\frac{f'-a_1'}{f-a_1}-m\frac{g'-a_1'}{g-a_1}\right)\right\rbrack.\eea

Then from (\ref{15}), we get $H_{nm}(z_{m,n})=0$. Therefore
\beas \ol N_{(m,n)}(r,a_1;f)+\ol N_{(m,n)}(r,a_2;f)&\leq&
N(r,0;H_{nm})+S(r,f)\\&\leq& T(r,\phi)+T(r,\psi)+S(r,f)=S(r,f)\eeas
and so from (\ref{1}), we get
\beas T(r,f)&=&\ol N(r,a_1;f)+\ol N(r,a_2;f)+S(r,f)\\
&=& \sideset{}{_{m,n}}{\sum}\left(\ol N_{(m,n)}(r,a_1;f)+\ol N_{(m,n)}(r,a_2;f)\right)+S(r,f)\\
&=&\sideset{}{_{m+n\geq 5}}{\sum}\big(\ol N_{(m,n)}(r,a_1;f)+\ol N_{(m,n)}(r,a_2;f)\big)+S(r,f)\\
&\leq & \frac{1}{5}\big(N(r,a_1;f)+N\left(r,a_1;g\right)
 +N(r,a_2;f)+N\left(r,a_2;g\right)\big)+S(r,f)\\
&\leq& \frac{2}{5}\left(T(r,f)+T(r,g)\right)+S(r,f)
=\frac{4}{5}\;T(r,f)+S(r,f),\eeas
which is impossible.
\end{proof}

\section {{\bf Proofs of the main results}} 

\begin{proof}[{\bf Proof of Theorem \ref{t1}}] 
We know that $f$ and $g$ share $a_1$ and $a_2$ IM, where $a_1, a_2\in S(f)$ are distinct such that either $a_1$ and $a_2$ are $c$-periodic or $a_1'-a_2'$ is non-constant.

Now we divide the following two cases.\par

{\bf Case 1.} Let $\phi\not\equiv 0$, where $\phi$ be defined by (\ref{2}). Clearly $f\not\equiv g$. 
Now from the proof of Lemma \ref{l4}, we have $\delta(f)\not\equiv 0$ and $T(r,\phi)=S(r,f)$. Consequently $\psi\not\equiv 0$, where $\psi$ is defined by (\ref{3}).

Denote by $\ol N\left(r,a_1;f,g\mid \geq 2\right)$ the reduced counting function of multiple $0$-points of $f-a_1$ and $g-a_1$.

Let $z_{p,q}\in S_{(p,q)}(a_1)\;(\text{or}\;S_{(p,q)}(a_2))$ such that $p\geq 2$ and $q\geq 2$. Then from (\ref{2}), one can easily conclude that $z_{p,q}$ is a zero of $\phi$. Also from the proof of Lemma \ref{l4}, we have $T(r,\phi)=S(r,f)$ and so 
\bea\label{e1.1}\label{e1.2}\label{e31.14} \ol N(r,a_1;f,g\mid \geq 2)+\ol N(r,a_2;f,g\mid \geq 2)=S(r,f).\eea

Denote by $N(r,0;f-g\mid f\not=a_1, a_2)$ the counting function of those $0$-points of $f-g$ which are neither the $0$-points of $f-a_1$ nor the $0$-points of $f-a_2$. Also we denote by $\ol N_{(l+1}(r,0;f-g\mid f=a_1,a_2)$ the reduced counting function of those $0$-points of $f-g$ with multiplicity greater than $l$ which are the $0$-points of both $f-a_1$ and $f-a_2$.

Now from (\ref{2}), we can easily deduce that 
\bea\label{e1.3}\label{e1.4}\label{e21.1} \ol N_{(2}(r,0;f-g\mid f= a_1, a_2)+N(r,0;\delta(f)(f-g)\mid f\neq a_1, a_2)=S(r,f).\eea

Rewriting (\ref{2}), we get
\beas f'=\frac{\beta _{1,2}f^2+\beta_{1,1}f+\beta_{1,0}+P_1}{f-g},\eeas
where $\beta_{1,2}=\frac{a_1'-a_2'-\phi}{a_1-a_2}$, $\beta_{1,1}= a_1'-a_1\frac{a_1'-a_2'}{a_1-a_2}+\frac{(a_1+a_2)\phi}{a_1-a_2}$, $\beta_{1,0}= -\frac{\phi a_1 a_2}{a_1-a_2}$ and $P_1=-\frac{a_1'-a_2'}{a_1-a_2}fg -\big(a_1'-a_1\frac{a_1'-a_2'}{a_1-a_2}\big)g$.
Note that
\bea\label{e3.1} f'(z+c)=\frac{\alpha_{1,2}f^2(z+c)+\alpha_{1,1}f(z+c)+\alpha_{1,0}+Q_1}{f(z+c)-g(z+c)},\eea
where $\alpha_{1,j}(z)=\beta_{1,j}(z+c)$, $j=0, 1, 2$ and $Q_1(z)=P_1(z+c)$. 

We now divide following two sub-cases.\par

{\bf Sub-case 1.1.} Let $\phi\not\equiv a_1'-a_2'$. Certainly $\alpha_{1,2}\not\equiv 0$. 
Now by induction and using (\ref{e3.1}) repeatedly, we obtain the following
\bea\label{e3.3} f^{(i)}(z+c)=\frac{\sideset{}{_{j=0}^{2i}}{\sum}\alpha_{i,j}f^j(z+c)+Q_i}{(f(z+c)-g(z+c))^{2i-1}},i=1,2,\ldots,k,\eea
where
\bea\label{e3.3a} Q_i=Q_i\big(f, g,g',\ldots,g ^{(i-1)}\big) 
=\sum\limits_{\substack{l<2i\\ l+j_1+j_2+\ldots+j_k\leq 2i}}\beta_{l, j_1, j_2,\ldots,j_i}f^l(g )^{j_1}(g ')^{j_2}\ldots (g ^{(2i-1)})^{j_i}.\eea

Here $\alpha_{k,j}, \beta_{l, j_1, j_2,\ldots,j_l}\in S(f)$ and $\psi_p:=\alpha_{p,2p}$ satisfies the recurrence formula
\bea\label{e3.4} \psi_1=\alpha_{1,2},\; \psi_{p+1}=\psi_p'+\psi_1 \psi_p,\;p=1,2,\ldots,k-1.\eea

Now using (\ref{e3.3}), we get
\bea\label{e3.3aa} g(z)=\frac{\sideset{}{_{j=0}^{2k}}{\sum}\gamma_j(z+c) f^j(z+c)+Q(z+c)}{(f(z+c)-g(z+c))^{2k-1}},\eea
where $\gamma_j\in S(f)$, $j=1,2,\ldots,k$ such that
\bea\label{e7} \gamma_{2k}(z+c)&=&b_0(z+c)+\sideset{}{_{i=1}^{k}}{\sum} b_i(z+c) \psi_i(z+c)\;\text{and}\nonumber\\ 
Q&=&\sum\limits_{\substack{l<2i\\ l+j_1+j_2+\ldots+j_k\leq 2k}}\zeta_{l, j_1, j_2,\ldots,j_k}f^l(g )^{j_1}(g ')^{j_2}\ldots (g ^{(i-1)})^{j_k}.\eea

Clearly $\zeta_{l, j_1, j_2,\ldots,j_k}\in S(f)$. If $\gamma_{2k}\equiv 0$, then the proof is done. Next suppose $\gamma_{2k}\not\equiv 0$. Then from (\ref{e3.3aa}), we get
\bea\label{e3.6a} \sideset{}{_{j=0}^{2k}}{\sum}\gamma_j f^j(z+c)=g(z)\left(f(z+c)-g(z+c)\right)^{2k-1}-Q(z+c).\eea

Using (\ref{sm}), (\ref{5}), (\ref{e7}) and the lemma of the logarithmic derivative, we deduce that
\bea\label{e3.5} m\left(r,\frac{Q(z+c)}{f^{2k-1}(z+c)g}\right)=S(r,f).\eea

Now from (\ref{sm}), (\ref{e3.6a}) and (\ref{e3.5}), we get 
\beas &&2k T(r,f(z+c))\\&=&
T\left(r,\sideset{}{_{j=0}^{2k}}{\sum}\gamma_j f^j(z+c)\right)+S(r,f)\\
&\leq& m\left(r, \left(1-\frac{g(z+c)}{f(z+c)}\right)^{2k-1}-\frac{Q(z+c)}{f^{2k-1}(z+c)g}\right)+m\big(r,f^{2k-1}(z+c)g\big)+S(r,f)\\
&\leq& m\big(r, f^{2k-1}(z+c)g\big)+S(r,f)
\leq (2k-1)T(r,f(z+c))+T(r,g(z))+S(r,f),\eeas
i.e., $T(r,f(z+c))\leq T(r,g)+S(r,f)$ and so by Lemma \ref{l2}, we get $T(r,f)\leq T(r,g)+S(r,f)$. 
Again (\ref{sm}) yields
\bea\label{bb.2} T(r,g)=m(r,g)\leq m(r,f)+S(r,f)=T(r,f)+S(r,f).\eea

Hence $T(r,f)=T(r,g)+S(r,f)$ and so from Lemma \ref{l4}, we get $f\equiv g$, which is absurd.\par

{\bf Sub-case 1.2.} Let $\phi\equiv a_1'-a_2'$. If $a_1'-a_2'$ is not constant, then
$a_1''\not\equiv a_2''$. Next we suppose $a_1$ and $a_2$ are $c$-periodic. We claim that
$a_1''\not\equiv a_2''$. If not, suppose $a_1''\equiv a_2''$. On integration, we get $a_1-a_2=c_0 z+c_1$, where $c_0, c_1\in\mathbb{C}$ such that $(c_0,c_1)\neq (0,0)$. Since $a_1$ and $a_2$ are $c$-periodic, $a_1-a_2$ is also $c$-periodic.
As $a_1-a_2$ is a polynomial, we see that $a_1-a_2$ is a constant. Then $\phi\equiv 0$, which is absurd. Hence  $a_1''\not\equiv a_2''$.  Differentiating (\ref{2}) once, we get
\bea\label{4a} &&((a_1''-a_2'')(f-a_1)-(a_1-a_2)(f''-a_1''))(f-g)\\&&
+((a_1'-a_2')(f-a_1)-(a_1-a_2)(f'-a_1'))(f'-g')\nonumber\\&=&
\phi'(f-a_1)(f-a_2)+\phi(f'-a_1')(f-a_2)+\phi(f-a_1)(f'-a_2'),\;\text{i.e.,}\nonumber\eea
\bea\label{e31.33} && (a_1''-a_2'')(f-a_1)^2-(a_1''-a_2'')(f-a_1)(g -a_1)
-(a_1-a_2)(f-a_1)(g''-a_1'')\\&&+(a_1-a_2)(g''-a_1'')(g -a_1)
+(a_1'-a_2')(f-a_1)(f'-a_1')\nonumber\\&&-(a_1'-a_2')(f-a_1)((g '-a_1')
-(a_1-a_2)(f'-a_1')^2+(a_1-a_2)(f'-a_1')((g '-a_1')\nonumber\\
&=&(a_1''-a_2'')(f-a_1)(f-a_2)+(a_1'-a_2')(f'-a_1')(f-a_2)
+(a_1'-a_2')(f-a_1)(f'-a_2').\nonumber\eea

Let $z_{p,1}\in S_{(p,1)}(a_1)(p \geq 2)$ such that $a_1(z_{p,1})-a_2(z_{p,1})\not=0,\infty$ and $a_1'(z_{p,1})-a_2'(z_{p,1})\not=0$.  
Then in some neighbourhood of $z_{p,1}$, we get by Taylor's expansion
\bea\label{sm1} \left\{\begin{array}{clcr} f(z)-a_1(z)&=&\tilde b_{p}(z-z_{p,1})^{p}+\tilde b_{p+1}(z-z_{p,1})^{p+1}+\ldots (\tilde b_{p}\not=0),\\
g(z) -a_1(z)&=&c_{1}(z-z_{p,1})+c_{2}(z-z_{p,1})^{2}+\ldots (c_{1}\not=0),\\
\phi(z)&=&d_{0}+d_{1}(z-z_{p,1})+d_{2}(z-z_{p,1})^2+\ldots\;\;(d_{0}\neq 0).\end{array}\right.\eea
Clearly 
\bea\label{sm2}&& \left\{\begin{array}{clcr} f'(z)-a_1'(z)&=&p\tilde b_{p}(z-z_{p,1})^{p-1}+(p+1)\tilde b_{p+1}(z-z_{p,1})^{p}+\ldots,\\
f''(z)-a_1''(z)&=&p(p-1)\tilde b_{p}(z-z_{p,1})^{p-2}+p(p+1)\tilde b_{p+1}(z-z_{p,1})^{p-1}+\ldots,\\
g'(z)-a_1'(z)&=&c_{1}+2c_{2}(z-z_{p,1})+\ldots,\\
g''(z)-a_1''(z)&=&2c_{2}+6c_{3}(z-z_{p,1})+\ldots,\\
\phi'(z)&=&d_{1}+2d_{2}(z-z_{p,1})+3d_{2}(z-z_{p,1})^2\ldots,\\
\phi''(z)&=&2d_{2}+6d_{2}(z-z_{p,1})\ldots.\end{array}\right.\eea

Now from (\ref{4a}), (\ref{sm1}) and (\ref{sm2}), we get
\bea\label{sm3} d_{0}=p c_{1},\;\;\text{i.e.},\;\;g'(z_{p,1})-a_1'(z_{p,1})=c_1=\frac{\phi(z_{p,1})}{p}=\frac{a_1'(z_{p,1})-a_2'(z_{p,1})}{p}.\eea
   
It is easy to calculate, from (\ref{e31.33})-(\ref{sm3}) that
\beas\label{e31.34} && \Big(\tilde b_p c_2 p^2 (p+1)(a_1(z_{p,1})-a_2(z_{p,1}))-\tilde b_p(a_1'(z_{p,1})-a_2'(z_{p,1}))^2\\&&+\tilde b_{p+1}(p+1)^2(a_1(z_{p,1})-a_2(z_{p,1}))(a_1'(z_{p,1})-a_2'(z_{p,1}))\\
&&-p\tilde b_p\left( (a_1'' (z_{p,1})-a_2''(z_{p,1}))(a_1(z_{p,1})-a_2(z_{p,1}))+(a_1'(z_{p,1})-a_2'(z_{p,1}))^2 \right)\\
&&- \tilde b_{p+1} p(p+1) (a_1(z_{p,1})-a_2(z_{p,1}))(a_1'(z_{p,1})-a_2'(z_{p,1}))\Big) (z-z_{p,1})^p\\&&+O\big((z-z_{p,1})^{p+1}\big)\equiv 0,\eeas
which shows that 
\bea\label{e31.35} &&(\tilde b_p c_2 p^2 (p+1)(a_1(z_{p,1})-a_2(z_{p,1}))-\tilde b_p(a_1'(z_{p,1})-a_2'(z_{p,1}))^2\nonumber\\
&&+\tilde b_{p+1}(p+1)^2(a_1(z_{p,1})-a_2(z_{p,1}))(a_1'(z_{p,1})-a_2'(z_{p,1}))\nonumber\\
&&-p\tilde b_p\left( (a_1'' (z_{p,1})-a_2''(z_{p,1}))(a_1(z_{p,1})-a_2(z_{p,1}))+(a_1'(z_{p,1})-a_2'(z_{p,1}))^2\right)\nonumber\\
&&- \tilde b_{p+1} p(p+1) (a_1(z_{p,1})-a_2(z_{p,1}))(a_1'(z_{p,1})-a_2'(z_{p,1}))=0.\eea

Differentiating (\ref{4a}), once we get
\beas\label{4aa} &&\left((a_1'''-a_2''')(f-a_1)+(a_1''-a_2'')(f'-a_1')-(a_1'-a_2')(g''-a_1'')-(a_1-a_2)(g'''-a_1''')\right)\times\nonumber\\&&(f-g)
+2((a_1''-a_2'')(f-a_1)-(a_1-a_2)(g''-a_1''))(f'-g')\nonumber\\&&
+((a_1'-a_2')(f-a_1)-(a_1-a_2)(f'-a_1'))(f''-g'')\nonumber\\&=&
\phi''(f-a_1)(f-a_2)+2\phi'(f'-a_1')(f-a_2)
+2\phi'(f-a_1)(f'-a_2')\nonumber\\&&+\phi(f''-a_1'')(f-a_2)+2\phi(f'-a_1')(f'-a_2')+\phi(f-a_1)(f''-a_2'')\eeas
and so
\begin{small}
\bea\label{e31.36} &&(a_1'''-a_2''')(f-a_1)^2-(a_1'''-a_2''')(f-a_1)(g -a_1)
+(a_1''-a_2'')(f-a_1)(f'-a_1')\nonumber\\&&-(a_1''-a_2'')(f'-a_1')(g -a_1)
-(a_1'-a_2')(f-a_1)(g''-a_1'')+(a_1'-a_2')(g''-a_1'')(g -a_1)\nonumber\\&&
-(a_1-a_2)(f-a_1)(g'''-a_1''')+(a_1-a_2)(g'''-a_1''')(g -a_1)
+2(a_1''-a_2'')(f-a_1)(f'-a_1')\nonumber\\&&-2(a_1''-a_2'')(f-a_1)((g '-a_1')
-2(a_1-a_2)(f'-a_1')(g''-a_1'')+(a_1'-a_2')(f-a_1)(g''-a_1'')\nonumber\\&&+2(a_1-a_2)(g''-a_1'')((g '-a_1')
-(a_1'-a_2')(f-a_1)((g ''-a_1'')\nonumber\\
&&-(a_1-a_2)(f'-a_1')(g''-a_1'')+(a_1-a_2)(f'-a_1')((g ''-a_1'')\nonumber\\
&=&(a_1'''-a_2''')(f-a_1)(f-a_2)+2(a_1''-a_2'')(f'-a_1')(f-a_2)
+2(a_1''-a_2'')(f-a_1)(f'-a_2')\nonumber\\&&+(a_1'-a_2')(g''-a_1'')(f-a_2)
+2(a_1'-a_2')(f'-a_1')(f'-a_2')+(a_1'-a_2')(f-a_1)(g''-a_2'').\eea
\end{small}

It is easy to calculate, from (\ref{sm1})-(\ref{sm3}) and (\ref{e31.36}) that
\beas && \Big( \tilde b_p c_2 p^2 (p+1)(a_1(z_{p,1})-a_2(z_{p,1}))+\tilde b_p(p-1)(a_1'(z_{p,1})-a_2'(z_{p,1}))^2\\&&+\tilde b_{p+1}(p+1)^2(a_1(z_{p,1})-a_2(z_{p,1}))(a_1'(z_{p,1})-a_2'(z_{p,1}))\\
&&-2p\tilde b_p\left( (a_1'' (z_{p,1})-a_2''(z_{p,1}))(a_1(z_{p,1})-a_2(z_{p,1}))+(a_1'(z_{p,1})-a_2'(z_{p,1}))^2\right)\\
&&- \tilde b_{p+1} p(p+1) (a_1(z_{p,1})-a_2(z_{p,1}))(a_1'(z_{p,1})-a_2'(z_{p,1}))\Big) (z-z_{p,1})^{p-1}\\&&+O\left(\left(z-z_{p,1}\right)^p\right)\equiv 0,\eeas
which shows that
\bea\label{e31.38} &&\tilde b_p c_2 p^2 (p+1)(a_1(z_{p,1})-a_2(z_{p,1}))+\tilde b_p(p-1)(a_1'(z_{p,1})-a_2'(z_{p,1}))^2\nonumber\\
&&+\tilde b_{p+1}(p+1)^2(a_1(z_{p,1})-a_2(z_{p,1}))(a_1'(z_{p,1})-a_2'(z_{p,1}))\nonumber\\
&&-2p\tilde b_p\left( (a_1'' (z_{p,1})-a_2''(z_{p,1}))(a_1(z_{p,1})-a_2(z_{p,1}))+(a_1'(z_{p,1})-a_2'(z_{p,1}))^2\right)\nonumber\\
&&- \tilde b_{p+1} p(p+1) (a_1(z_{p,1})-a_2(z_{p,1}))(a_1'(z_{p,1})-a_2'(z_{p,1}))=0.\eea

Now from (\ref{e31.35}) and (\ref{e31.38}), we have
\bea\label{e31.39} \tilde b_p p (a_1(z_{p,1})-a_2(z_{p,1}))(a_1''(z_{p,1})-a_2''(z_{p,1}))=0.\eea

Since $\tilde b_p\not=0$ and $a_1(z_{p,1})-a_2(z_{p,1})\not=0,\infty$, from (\ref{e31.39}), we get $a_1''(z_{p,1})-a_2''(z_{p,1})=0$.
As $a_1''\not\equiv a_2''$, we deduce that
\bea\label{rm.1}\sideset{}{_{p\geq 2}}{\sum}\ol N_{(p,1)}(r,a_1;f)\leq N(r,0;a_1''-a_2'')\leq S(r,f).\eea

Let $\hat{z}_{p,1}\in S_{(p,1)}(a_2)(p \geq 2)$ such that $\phi(\hat{z}_{p,1})\neq 0, \infty$ and $a_1(\hat{z}_{p,1})-a_2(\hat{z}_{p,1})\neq 0, \infty$. 
Then in some neighbourhood of $\hat{z}_{p,1}$, we get by Taylor's expansion
\bea\label{sm5a} \left\{\begin{array}{clcr} f(z)-a_2(z)&=&\hat{b}_{p}(z-\hat{z}_{p,1})^{p}+\hat{b}_{p+1}(z-\hat{z}_{p,1})^{p+1}+\ldots (\hat{b}_{p}\not=0),\\
g(z) -a_2(z)&=&\hat{c}_{1}(z-\hat{z}_{p,1})^q+\hat{c}_{2}(z-\hat{z}_{p,1})^{2}+\ldots (\hat{c}_{1}\not=0),\\
\phi(z)&=&\hat{d}_{0}+\hat{d}_{1}(z-\hat{z}_{p,q})+\hat{d}_{2}(z-\hat{z}_{p,q})^2+\ldots\;\;(\hat{d}_{0}\neq 0).\end{array}\right.\eea

Similarly, we get
\beas g'(\hat{z}_{p,1})-a_2'(\hat{z}_{p,1})=\hat{c}_1=-\frac{\phi(\hat{z}_{p,1})}{p}=-\frac{a_1'(\hat{z}_{p,1})-a_2'(\hat{z}_{p,1})}{p}.\eeas

Now proceeding in the same way as done above and using (\ref{sm5a}) instead of (\ref{sm1}), we can conclude that $a_1''(\hat{z}_{p,1})-a_2''(\hat{z}_{p,1})=0$ and so 
\bea\label{rm.2}\sideset{}{_{p\geq 2}}{\sum}\ol N_{(p,1)}(r,a_2;f)\leq N(r,0;a_1''-a_2'')\leq S(r,f).\eea
 
Therefore from (\ref{e31.14}), (\ref{rm.1}) and (\ref{rm.2}), we get
\bea\label{rmbm.1} \ol N_{(2}(r,a_1;f)+\ol N_{(2}(r,a_2;f)\leq\sideset{}{_{p\geq 2}}{\sum}(\ol N_{(p,1)}(r,a_1;f)+\ol N_{(p,1)}(r,a_2;f))=S(r,f).\eea

On the other hand from (\ref{2}), we get $f-g=\frac{(a_1'-a_2') (f-a_1)(f-a_2)}{\delta(f)}$ and so
\bea\label{e31.2}\label{e31.3} g -a_1
=\frac{-(a_1-a_2)(f-a_1)(f'-a_2')}{\delta(f)}\;\text{and}\;g -a_2=\frac{-(a_1-a_2)(f-a_2)(f'-a_1')}{\delta(f)}.\eea

Let
\bea\label{bm.7} \varphi_1=\frac{f-g }{(f-a_1)(f-a_2)}.\eea

Clearly $\varphi_1\not\equiv 0$. Since $\phi=a_1'-a_2'$, from (\ref{2}) and (\ref{bm.7}), we get
\bea\label{bm.8} (a_1'-a_2')\left(f-a_1-\frac{1}{\varphi_1}\right)=(a_1-a_2)(f'-a_1').\eea

We claim that $\varphi_1$ is non-constant. If not, suppose $\varphi_1=d\in\mathbb{C}\backslash \{0\}$. Then from (\ref{bm.8}), we have 
\[f'-a_1'-\frac{a_1'-a_2'}{a_1-a_2}(f-a_1)=-d\frac{a_1'-a_2'}{a_1-a_2}.\]

Performing integration, we conclude that $f-a_1$ is a small function of $f$, which is impossible.

Let $\phi_1=\frac{\varphi_1'}{\varphi_1}$. Clearly $\phi_1\not\equiv 0$.
Let $z_{1,q}\in S_{(1,q)}(a_1)$ such that $a_1(z_{1,q})-a_2(z_{1,q})\not=0,\infty$. Then $z_{1,q}$ is a zero of $f-g $ and so $\varphi_1(z_{1,q})\neq\infty$.
Similarly if $\hat{z}_{1,q}\in S_{(1,q)}(a_2)$ such that $a_1(\hat{z}_{1,q})-a_2(\hat{z}_{1,q})\not=0,\infty$, then $\varphi_1(\hat{z}_{1,q})\neq\infty$. Now (\ref{e31.14}), (\ref{e21.1}) and (\ref{rmbm.1}) give
$\ol N(r,0;\varphi_1)+N(r,\varphi_1)=S(r,f)$ and so $T(r,\phi_1)=S(r,f)$.
Taking logarithmic differentiation on (\ref{bm.7}), we get
\beas \phi_1=\frac{f'-g'}{f-g}-\frac{f'-a_1'}{f-a_1}-\frac{f'-a_2'}{f-a_2},\;\text{i.e.},\;\eeas
\bea\label{bm.10}&&\phi_1(f-a_1)^2(f-a_2)-\phi_1(f-a_1)(f-a_2)(g -a_1)\\
&=&-(f-a_1)(f-a_2)((g '-a_1')+(f'-a_1')(f-a_2)(g -a_1)\nonumber\\&&+(f-a_1)(f'-a_2')(g -a_1)-(f-a_1)^2(f'-a_2')\nonumber\eea
or 
\bea\label{bm.11}&&\phi_1(f-a_1)(f-a_2)^2-\phi_1(f-a_1)(f-a_2)(g -a_2)\\
&=& -(f-a_1)(f-a_2)((g '-a_2')+(f'-a_1')(f-a_2)(g -a_2)\nonumber\\&&+(f-a_1)(f'-a_2')(g -a_2)-(f'-a_1')(f-a_2)^2.\nonumber\eea

We use $N_{(l+1}(r,a;f)$ to denote the counting function of $a$-points of $f$ with multiplicity greater than $l\in\mathbb{N}$. Also $\ol N_{(l+1}(r,a;f)$ is the reduced counting function.

Let $z_{1,q}\in S_{(1,q)}(a_1)\;(q\geq 3)$ such that $a_1(z_{1,q})-a_2(z_{1,q})\neq 0,\infty$ and $a_1'(z_{1,q})-a_2'(z_{1,q})\neq 0$. 
Clearly $\delta(f(z_{1,q}))\neq 0$ and so from (\ref{e31.2}), we see that $z_{1,q}$ is a zero of $f'-a_2'$ of multiplicity $q-1$. Then from (\ref{bm.10}), we calculate that $\phi_1(z_{1,q})=0$. Since $\phi_1\not\equiv 0$, we have
\beas\label{bm.18} \sideset{}{_{q\geq 3}}{\sum} \ol N_{(1,q)}(r,a_1;f)\leq N(r,0;\phi_1)\leq S(r,f)\eeas
and so from (\ref{e31.14}), we get $\ol N_{(3}(r,a_1;g )=S(r,f).$
Similarly if $\hat{z}_{1,q}\in S_{(1,q)}(a_2)\;(q\geq 3)$ such that $a_1(\hat{z}_{1,q})-a_2(\hat{z}_{1,q})\neq 0,\infty$ and $a_1'(\hat{z}_{1,q})-a_2'(\hat{z}_{1,q})\neq 0$, 
then $\delta(f(\hat{z}_{1,q}))\neq 0$ and $\hat{z}_{1,q}$ is a zero of $f'-a_1'$ of multiplicity $q-1$ by (\ref{e31.3}). 
Also from (\ref{bm.11}), we get $\phi_1(\hat{z}_{1,q})=0$ and so $\sum_{q\geq 3} \ol N_{(1,q)}(r,a_2;f)\leq S(r,f)$. Again
from (\ref{e31.14}), we have $\ol N_{(3}(r,a_2;g)=S(r,f).$ Therefore
\bea\label{bm.12} \ol N_{(3}(r,a_1;g)+\ol N_{(3}(r,a_2;g)=S(r,f).\eea

Let $\alpha$ be an arbitrary small function of $f$. Then from (\ref{sm}), we get
\bea\label{pp1}m\left(r,\frac{g -\alpha}{f-\alpha}\right)=m\left(r,\frac{\mathscr{L}_k(f(z+c)-\alpha(z+c))+\mathscr{L}_k(\alpha(z+c))-\alpha}{f-\alpha}\right)\leq m\left(r,\frac{1}{f-\alpha}\right)+S(r,f).\eea

Now using Lemma \ref{l1}, we get from (\ref{3}), (\ref{x0}) and (\ref{pp1}) that
\beas m(r,\psi)&=&m\left(r,\frac{\delta(g )(g -\alpha)}{(g -a_1)(g -a_2)}\left(\frac{f-\alpha}{g -\alpha}-1\right)\right)\\
&\leq& m\left(r,\frac{\delta(g )(g -\alpha)}{(g -a_1)(g -a_2)}\right)+m\left(r,\frac{f-\alpha}{g -\alpha}-1\right)\\
&\leq& m\left(r,\frac{f-\alpha}{g -\alpha}\right)+S(r,f)
=T\left(r,\frac{g -\alpha}{f-\alpha}\right)-N\left(r,\frac{f-\alpha}{g -\alpha}\right)+S(r,f)\\
&=& m\left(r,\frac{g -\alpha}{f-\alpha}\right)+N\left(r,\frac{g -\alpha}{f-\alpha}\right)-N\left(r,\frac{f-\alpha}{g -\alpha}\right)+S(r,f)\\
&\leq& m\left(r,\frac{1}{f-\alpha}\right)+N(r,0;f-\alpha)-N(r,0;g -\alpha)+S(r,f)\\
&=& T(r,f)-N(r,\alpha;g)+S(r,f).\eeas

Since $N(r,\psi)=S(r,f)$, we have
\bea\label{bm.3} T(r,\psi)\leq T(r,f)-N(r,\alpha;g)+S(r,f).\eea

We use $N(r,a;f\mid=l)$ to denote the counting function of $a$-points of $f$ with multiplicity exactly $l\in\mathbb{N}$. Also $\ol N(r,a;f\mid=l)$ is the reduced counting function.

Now we divide following two sub-cases.\par

{\bf Sub-case 1.2.1.} Let $\ol N(r,a_1;g\mid=2)=S(r,f)$.
Then from (\ref{e31.14}) and (\ref{bm.12}), we get
\beas\label{bm.1}\label{bm.5} \sideset{}{_{q\geq 2}}{\sum}\ol N_{(1,q)}(r,a_1;f)=S(r,f)\eeas
and so from (\ref{rmbm.1}), we get
\bea\label{bm.6} \ol N(r,a_1;f)&=&\sideset{}{_{p,q}}{\sum}\ol N_{(p,q)}(r,a_1;f)=
\ol N_{(1,1)}(r,a_1;f)+\sideset{}{_{p\geq 2}}{\sum}\ol N_{(p,1)}(r,a_1;f)\\&&+\sideset{}{_{q\geq 2}}{\sum}\ol N_{(1,q)}(r,a_1;f)=\ol N_{(1,1)}(r,a_1;f)+S(r,f)\nonumber.\eea

First suppose $H\equiv 0$. On integration, we get
$\frac{f-a_1}{f-a_2} \equiv c_2 \frac{g -a_1}{g -a_2}$,
where $c_2\in\mathbb{C}\backslash \{0\}$. Using Mohon'ko lemma \cite{AZM} we get $T(r,f)=T(r,g )+S(r,f)$ and so Lemma \ref{l4} gives $f\equiv g$, which is impossible.\par

Next suppose $H\not\equiv 0$. Clearly $m(r,H)=S(r,f)$ and $N(r,H)=\ol N_{(1,2)}(r,a_2;f)$. Therefore
\bea\label{bm.14} T(r,H)=m(r,H)+N(r,H)=\ol N_{(1,2)}(r,a_2;f)+S(r,f).\eea

Now from (\ref{rm}) and (\ref{1}), we get $T(r,f)=m(r,f-g )+S(r,f)$.
Therefore from (\ref{2}), (\ref{3}), (\ref{bm.3}) and (\ref{bm.14}), we have
\bea T(r,f)&=& m\left(r,\frac{H (f-g )}{H}\right)+S(r,f)
=m\left(r,\frac{\phi-\psi}{H}\right)+S(r,f)\nonumber\\
&=& T\left(r, \frac{H}{\phi-\psi}\right)+S(r,f)
\leq T(r,\psi)+T(r,H)+S(r,f)\nonumber\\
&\leq& T(r,f)+\ol N_{(1,2)}(r,a_2;f)-N(r,\alpha;g)+S(r,f)\nonumber\eea
and so 
\bea\label{bm.15} N(r,\alpha;g)\leq \ol N_{(1,2)}(r,a_2;f)+S(r,f).\eea

Suppose $\alpha=a_2$. Then from (\ref{bm.15}), we get
\bea\label{bm.16} \ol N(r,a_2;g)\leq \ol N_{(1,2)}(r,a_2;f)+S(r,f).\eea

Since $f$ and $g $ share $a_2$ IM, from (\ref{e31.14}), (\ref{rmbm.1}) and (\ref{bm.12}), we get
\beas \ol N(r,a_2;g)+S(r,f)
=\ol N(r,a_2;f)+S(r,f)=\sideset{}{_{i=1}^2}{\sum}\ol N_{(1,i)}(r,a_2;f)+S(r,f)\eeas
and so from (\ref{bm.16}), we get $\ol N_{(1,1)}(r,a_2;f)=S(r,f)$.
Let 
\bea\label{bm.17} G=\frac{g '-a_1'}{g -a_1}-\frac{f'-a_1'}{f-a_1}-\frac{a_1'-a_2'}{a_1-a_2}.\eea

Clearly $G\not\equiv 0$. Also from (\ref{bm.6}), we get $N(r,G)=S(r,f)$. Since $m(r,G)=S(r,f)$, we have $T(r,G)=S(r,f)$. 
Let $\hat{z}_{1,2}\in S_{(1,2)}(a_2)$ such that $a_1(\hat{z}_{1,2})-a_2(\hat{z}_{1,2})\neq 0, \infty$, $a_1'(\hat{z}_{1,2})-a_2'(\hat{z}_{1,2})\neq 0$ and $\phi(\hat{z}_{1,2})\not=0,\infty$. 
Then $g (\hat{z}_{1,2})=a_2(\hat{z}_{1,2})$ and $g '(\hat{z}_{1,2})=a_2'(\hat{z}_{1,2})$. 
Also from (\ref{e31.3}), we see that $\hat{z}_{1,2}$ is a simple zero of $f'-a_1'$, i.e., $f'(\hat{z}_{1,2})=a_1'(\hat{z}_{1,2})$.
Then from (\ref{bm.17}), we get $G(\hat{z}_{1,2})=0$ and so 
$\ol N_{(1,2)}(r,a_2;f)\leq N(r,0;G)+S(r,f)\leq T(r,G)+S(r,f)=S(r,f)$.
Therefore from (\ref{bm.12}), we get
\beas\label{bm.1a} \sideset{}{_{q\geq 2}}{\sum}\ol N_{(1,q)}(r,a_2;f)=S(r,f)\eeas
and so from (\ref{rmbm.1}), we get
\bea\label{bm.6a} \ol N(r,a_2;f)&=&\ol N_{(1,1)}(r,a_2;f)+\sideset{}{_{p\geq 2}}{\sum}\ol N_{(p,1)}(r,a_2;f)+\sideset{}{_{q\geq 2}}{\sum}\ol N_{(1,q)}(r,a_2;f)\\&=&\ol N_{(1,1)}(r,a_2;f)+S(r,f)\nonumber.\eea

Then from (\ref{1}), (\ref{bm.6}) and (\ref{bm.6a}), we get
\bea\label{bm.2} T(r,f)=\ol N(r,a_1;f)+\ol N(r,a_2;f)+S(r,f)=
\ol N_{(1,1)}(r,a_1;f)+\ol N_{(1,1)}(r,a_2;f)+S(r,f).\eea

First suppose $H_{11}\equiv 0$. On integration, we get $\frac{f-a_1}{f-a_2}\equiv c_1\frac{g-a_1}{g-a_2}$, where $c_1\in\mathbb{C}\backslash \{0\}$. Using Mohon'ko lemma \cite{AZM} we get $T(r,f)=T(r,g )+S(r,f)$ and so Lemma \ref{l4} gives $f\equiv g$, which is impossible.\par

Next suppose $H_{11}\not\equiv 0$. Let $z_{1,1}\in S_{(1,1)}(a_1)\cup S_{(1,1)}(a_2)$. Clearly  $H_{11}(z_{1,1})=0$ and so
\bea\label{bm.4}\sideset{}{_{i=1}^2}{\sum}\ol N_{(1,1)}(r,a_i;f)\leq N(r,0;H_{11})+S(r,f)\leq T(r, H_{11})+S(r,f)\leq T(r,\psi)+S(r,f)\eea 
and so from (\ref{bm.3}) and (\ref{bm.2}), we get $T(r,f)\leq T(r,\psi)+S(r,f)\leq T(r,f)-N(r,\alpha;g )+S(r,f)$, i.e., $N(r,\alpha;g )=S(r,f)$. 
In particular, we have $\ol N(r,a_1;g)+\ol N(r,a_2;g)=S(r,f)$, i.e., $\ol N(r,a_1;f)+\ol N(r,a_2;f)=S(r,f)$ and so from (\ref{1}), we get a contradiction.\par

{\bf Sub-case 1.2.2.} Let $\ol N(r,a_1;g\mid=2)\not=S(r,f)$. If $\ol N(r,a_2;g\mid=2)=S(r,f)$, then proceeding in the way as done in Sub-case 1.2.1, we get a contradiction. Hence we assume  
$\ol N(r,a_2;g\mid=2)\neq S(r,f)$.
In this case, from (\ref{1}), (\ref{rmbm.1}) and (\ref{bm.12}), we get
\bea\label{alr.1} T(r,f)=\sideset{}{_{i=1}^2}{\sum}\left(\ol N_{(1,1)}(r,a_i;f)+\ol N_{(1,2)}(r,a_i;f)\right)+S(r,f).\eea

Clearly $H_{11}\not\equiv 0$, otherwise we get $T(r,g)=T(r,f)+S(r,f)$ and so by Lemma \ref{l4}, we get a contradiction. Let $z_{1,1}\in S_{(1,1)}(a_1)\cup S_{(1,1)}(a_2)$. Then from (\ref{bm.3}) and (\ref{bm.4}), we see that
\beas\sideset{}{_{i=1}^2}{\sum} \ol N_{(1,1)}(r,a_i;f)\leq T(r,f)-N(r,\alpha;g)+S(r,f)\eeas
and so from (\ref{alr.1}), we get 
\bea\label{alr.2} \ol N(r,\alpha;g)\leq N(r,\alpha;g)\leq \sideset{}{_{i=1}^2}{\sum} \ol N_{(1,2)}(r,a_i;f)+S(r,f).\eea

Suppose $\alpha=a_1$. Now from (\ref{rmbm.1}) and (\ref{bm.12}), we have $\ol N(r,a_1;g)=\ol N(r,a_1;f)=\ol N_{(1,1)}(r,a_1;f)+\ol N_{(1,2)}(r,a_1;f)+S(r,f)$. Therefore from (\ref{alr.2}), we get
$\ol N_{(1,1)}(r,a_1;f)\leq N_{(1,2)}(r,a_2;f)+S(r,f)$. Again if we take $\alpha=a_2$, then from (\ref{alr.2}), we get $\ol N_{(1,1)}(r,a_2;f)\leq N_{(1,2)}(r,a_1;f)+S(r,f)$.
Consequently from (\ref{alr.1}), we have
\bea\label{alr.3} T(r,f)\leq 2\sideset{}{_{i=1}^2}{\sum} \ol N_{(1,2)}(r,a_i;f)+S(r,f).\eea

First suppose $H_{21}\equiv 0$. On integration, we have
$\left(\frac{f-a_1}{f-a_2}\right)^2=c_1\frac{g -a_1}{g -a_2}$,
where $c_1\in\mathbb{C}\backslash \{0\}$. Using Mohon'ko lemma \cite{AZM}, we get $2T(r,f)=T(r,g )+S(r,f)$. Then from (\ref{bb.2}), we have $2T(r,f)\leq T(r,f)+S(r,f)$, which is impossible.\par

Next suppose $H_{21}\not\equiv 0$. Let $z_{1,2}\in S_{(1,2)}(a_1)\cup S_{(1,2)}(a_2)$. Clearly $H_{21}(z_{1,2})=0$ and so
\bea\label{alr.4}\sideset{}{_{i=1}^2}{\sum}\ol N_{(1,2)}(r,a_i;f)\leq N(r,0;H_{21})+S(r,f)\leq T(r, H_{21})+S(r,f)\leq T(r,\psi)+S(r,f).\eea 

Now from (\ref{bm.3}) and (\ref{alr.4}), we get
\bea\label{alr.5}\sideset{}{_{i=1}^2}{\sum}\ol N_{(1,2)}(r,a_i;f)\leq T(r,f)-N(r,\alpha;g)+S(r,f).\eea 

Then from (\ref{alr.5}), we have
\bea\label{alr.6}&&\sideset{}{_{i=1}^2}{\sum}\ol N_{(1,2)}(r,a_i;f)\leq T(r,f)-N(r,a_1;g)+S(r,f)\\ 
\text{and} 
\label{alr.7}&&\sideset{}{_{i=1}^2}{\sum}\ol N_{(1,2)}(r,a_i;f)\leq T(r,f)-N(r,a_2;g)+S(r,f).\eea  

Adding (\ref{alr.6}) and (\ref{alr.7}), we get 
\bea\label{alr.8}2\sideset{}{_{i=1}^2}{\sum}\ol N_{(1,2)}(r,a_i;f)\leq 2T(r,f)-N(r,a_1;g)-N(r,a_2;g)+S(r,f).\eea 

Now using (\ref{alr.3}), from (\ref{alr.8}), we get
\bea\label{alr.9} N(r,a_1;g)+N(r,a_2;g)\leq T(r,f)+S(r,f).\eea

Again from (\ref{alr.3}) and (\ref{alr.9}), we conclude that
\bea\label{alr.10} N(r,a_1;g)+N(r,a_2;g)\leq 2\sideset{}{_{i=1}^2}{\sum} \ol N_{(1,2)}(r,a_i;f)+S(r,f).\eea

Note that
$\sum_{1=1}^2\left(\ol N_{(1,1)}(r,a_i;f)+2\ol N_{(1,2)}(r,a_i;f)\right)\leq N(r,a_1;g )+N(r,a_2;g )+S(r,f)$
and so from (\ref{alr.10}) we get $\sum_{1=1}^2 \ol N_{(1,1)}(r,a_i;f)= S(r,f)$. Then
from (\ref{alr.1}), we get
\beas\label{alr.12} T(r,f)&=&\ol N_{(1,2)}(r,a_1;f)+\ol N_{(1,2)}(r,a_2;f)+S(r,f)\nonumber\\
&\leq& \ol N_{(2}(r,a_1;g)+\ol N_{(2}(r,a_2;g)+S(r,f)
\leq T(r,g )+S(r,f).\nonumber\eeas

Therefore from (\ref{bb.2}), we get $T(r,f)=T(r,g )+S(r,f)$, which is absurd by Lemma \ref{l4}.\par

{\bf Case 2.} Suppose $\phi\equiv 0$. Since $\delta(f)\not\equiv 0$, we get $f\equiv g$, i.e., $f(z)\equiv \mathscr{L}_k(f(z+c))$.
\end{proof}

\begin{proof}[{\bf Proof of Corollary \ref{c1}}] If $a_1$ and $a_2$ are constants, then $a_1'-a_2'\equiv 0$. 
Now if $\phi\equiv a_1'-a_2'$, then immediately $\phi\equiv 0$. 
Therefore if we follow the proof of Theorem \ref{t1}, then Sub-case 1.2 does not occur in the proof of Corollary \ref{c1}. Now
from the proof of Theorem \ref{t1}, we need only to consider the case when $\gamma_{2k}\equiv 0$. Clearly $\gamma_{2k}\equiv 0$ gives
\bea\label{t2.1} \sideset{}{_{i=1}^k}{\sum} b_i \psi_i=-b_0.\eea

Now from the recurrence formula (\ref{e3.4}) for $\psi_i$, we derive the expression
\bea\label{t2.2} \psi_i=\psi_1^i+R_{i-1}(\psi_1),\eea
where $R_{i-1}(\psi_1)$ is a differential polynomial. Then using (\ref{t2.2}), we get from (\ref{t2.1}) that 
\bea\label{t2.3} b_k \psi_1^k=\tilde R_{k-1}(\psi_1),\eea
where $\tilde R_{k-1}(\psi_1)$ is a differential polynomial in $\psi_1$.
Clearly $\phi$ is an entire function and so is $\psi_1$.

First suppose $\psi_1$ is transcendental. Then using Clunie lemma (see \cite{JC}) to (\ref{t2.3}), we get $m(r,\psi_1)=S(r,\psi_1)$. Since $N(r,\psi_1)=0$, it follows that $T(r,\psi_1)=S(r,\psi_1)$, which is impossible.\par

Next suppose $\psi_1$ is a polynomial. If $\psi_1$ is non-constant, then from (\ref{t2.3}), we get a contradiction. Hence $\psi_1$ is a constant.
\end{proof}

\begin{proof}[{\bf Proof of Corollary \ref{c2}}] From the proof of Theorem \ref{t1}, we need only to consider the case when $\phi\not\equiv a_1'-a_2'$. Now from (\ref{e3.3}), we get
\bea\label{t3.1} f^{(k)}(z+c)=\frac{\sideset{}{_{j=0}^{2k}}{\sum}\alpha_{k,j}f^j(z+c)+Q_k}{(f(z+c)-g(z+c))^{2k-1}},\eea
where
\beas Q_k=\sum\limits_{\substack{l<2k\\ l+j_1+j_2+\ldots+j_k\leq 2k}}\beta_{l, j_1, j_2,\ldots,j_i}f^l(g )^{j_1}(g ')^{j_2}\ldots (g ^{(2k-1)})^{j_k}.\eeas

Here $\alpha_{k,j}, \beta_{l, j_1, j_2,\ldots,j_l}\in S(f)$ and $\psi_p:=\alpha_{p,2p}$ satisfies the recurrence formula defined by (\ref{e3.4}).

Now we divide the following two cases.\par

{\bf Case 1.} Let $\psi_k=\alpha_{k,2k}\not\equiv 0$ for $p=1,2,\ldots,k-1$. Now following the same procedure as done in Sub-case 1.1 in the proof of Theorem \ref{t1}, we get a contradiction.\par
 
{\bf Case 2.} Let $\psi_k=\alpha_{k,2k}\equiv 0$. Now from (\ref{e3.4}), we get $\psi_{k-1}'+\psi_1 \psi_{k-1}\equiv 0$.
On integration, we have $\psi_{k-1}(z)=c_0e^{\xi(z)}$, where $\xi(z)=-\int_{0}^z \psi_1(z)\;\text{d}z$ is a non-constant meromorphic function and $c_0\in\mathbb{C}\backslash \{0\}$. We know that if $\xi(z)$ has a pole at the point $z_0$, then $z_0$ is an essential singularity of $e^{\xi(z)}$. Since $\psi_{k-1}$ is a non-constant meromorphic function, we deduce that $\xi$ is a non-constant entire function and so $\psi_1$ is also non-constant entire function. Now if $\psi_1$ is a polynomial, then $\psi_{i+1}$ is also a polynomial for $i=1,2,\ldots, k-1$. Therefore we immediately get a contradiction. Hence $\psi_1$ is a transcendental entire function. 
Now from (\ref{t2.2}), we get $b_k\psi_1^k=\tilde R_{k-1}(\psi_1)$.
By Clunie lemma (see \cite{JC}), we get $m(r,\psi_1)=S(r,\psi_1)$. Since $N(r,\psi_1)=0$, we have $T(r,\psi_1)=S(r,\psi_1)$, which is impossible.
\end{proof}

\begin{rem} We can easily conclude that Theorem \ref{t1} and Corollaries \ref{c1}-\ref{c3} are still valid for non-constant meromorphic function satisfying $N(r,f) = S(r, f)$.
\end{rem}

\medskip
{\bf Statements and declarations:}

\smallskip
\noindent \textbf {Conflict of interest:} The authors declare that there are no conflicts of interest regarding the publication of this paper.

\smallskip
\noindent{\bf Funding:} There is no funding received from any organizations for this research work.

\smallskip
\noindent \textbf {Data availability statement:}  Data sharing is not applicable to this article as no database were generated or analyzed during the current study.


\begin{thebibliography}{99}
\bibitem{BM1} {\sc A. Banerjee} and {\sc S. Majumder}, On the uniqueness of certain types of differential-difference polynomials, \textit{Anal. Math.}, \textbf{43} (3) (2017), 415-444.

\bibitem{BM2} {\sc A. Banerjee} and {\sc S. Majumder}, Uniqueness of certain type of differential-difference and difference polynomials, \textit{Tamkang J. Math.}, \textbf{49} (1) (2018), 1-24. https://doi.org/10.5556/j.tkjm.49.2018.2234.

\bibitem{BM3} {\sc A. Banerjee} and {\sc S. Majumder}, Uniqueness of entire functions whose difference polynomials sharing a polynomial of certain degree with finite weight, \textit{An. Univ. Craiova Ser. Mat. Inform.}, \textbf{47} (2) (2020), 215-225.
 
\bibitem{CC2} {\sc C. X. Chen} and {\sc Z. X. Chen}, Entire functions and their high order differences, \textit{Taiwanese J. Math.}, \textbf{18} (3) (2014), 711-729.

\bibitem{CF1} {\sc Y. M. Chiang} and {\sc S. J. Feng}, On the Nevanlinna characteristic of $f(z +\eta)$ and difference equations in the complex plane, \textit{Ramanujian J.}, \textbf{16} (1) (2008), 105-129.

\bibitem{JC} {\sc J. Clunie}, On integral and meromorphic functions, \textit{J. London Math. Soc.}, \textbf{37} (1962), 17-22.

\bibitem{DM1} {\sc A. Dam} and {\sc S. Majumder}, A power of a combination of meromorphic function with its shift sharing small function with its derivative, \textit{Quaestiones Mathematicae}, \textbf{43} (4) (2019), 449-466. https://doi.org/10.2989/16073606.2019.1578835 

\bibitem{FLSY} {\sc M. L. Fang}, {\sc H. Li}, {\sc W. Shen} and {\sc X. Yao}, A difference version of the Rubel-Yang-Mues-Steinmetz-Gundersen theorem, \textit{Comput. Methods Funct. Theory}, \textbf{24} (2024), 811-832.


\bibitem{HKT} {\sc R. Halburd}, {\sc R. J. Korhonen} and {\sc K. Tohge}, Holomorphic curves with shift-invariant hyperplane preimages, \textit{Trans. Amer. Math., Soc.}, \textbf{366} (2014), 4267-4298.

\bibitem{WKH} {\sc W. K. Hayman}, Meromorphic functions, Clarendon Press, Oxford (1964).

\bibitem{HKLRZ} {\sc J. Heittokangas}, {\sc R. J. Korhonen}, {\sc I. Laine}, {\sc J. Rieppo} and {\sc J. L. Zhang}, Value sharing results for shifts of meromorphic functions and sufficient conditions for periodicity, \textit{J. Math. Anal. Appl.}, \textbf{355} (1) (2009), 352-363.

\bibitem{HKLR1} {\sc J. Heittokangas}, {\sc R. J. Korhonen}, {\sc I. Laine} and {\sc J. Rieppo}, Uniqueness of meromorphic functions sharing values with their shifts, \textit{Complex Var. Elliptic Equ.}, \textbf{56} (1-4) (2011), 81-92. 


\bibitem{HF1} {\sc X. H. Huang} and {\sc M. L. Fang}, Unicity of entire functions concerning their shifts and derivatives, \textit{Comput. Methods Funct. Theory}, \textbf{21} (2021), 523-532.


\bibitem{11} {\sc K. Liu} and {\sc X. J. Dong}, Some results related to complex differential-difference equations of certain types, \textit{Bull. Korean Math. Soc.}, \textbf{51} (2014), 1453-1467.

\bibitem{M1} {\sc S. Majumder}, Uniqueness and value distribution of differences of meromorphic functions, \textit{Appl. Math. E-Notes}, \textbf{17} (2017), 114-123.
 
\bibitem{M2} {\sc S. Majumder}, Meromorphic functions sharing one value with their derivatives concerning the difference operator, \textit{Commun. Math. Stat.}, \textbf{5} 

\bibitem{MD} {\sc S. Majumder} and {\sc P. Das}, Growth of meromorphic solution related to linear difference equations and value sharing,
 \textit{Complex Var. and Elliptic Equ.}, \textbf{70} (5) (2025), 789-819.
 
\bibitem{MD1} {\sc S. Majumder} and {\sc P. Das}, Meromorphic functions sharing three values with their shift, \textit{Ukr. Math. J.}, \textbf{76} (2024), 988-1004. https://doi.org/10.1007/s11253-024-02368-x.

\bibitem{MP1} {\sc S. Majumder} and {\sc D. Pramanik}, On the conjecture of Chen and Yi, \textit{Houston J. math.}, \textbf{49} (3) (2023), 509-530.

\bibitem{MP2} {\sc S. Majumder} and {\sc D. Pramanik}, On meromorphic solutions of a certain type of nonlinear differential-difference
equation, \textit{Rend. Istit. Mat. Univ. Trieste}, \textbf{55} (2) (2023), 1-24. DOI: 10.13137/2464-8728/34711.

\bibitem{MS1} {\sc S. Majumder} and {\sc S. Saha}, Power of meromorphic function sharing polynomials with derivative of it's combination with it's shift, \textit{Mathematica Slovaca}, \textbf{69} (5) (2019), 1037-1052. https://doi.org/10.1515/ms-2017-0288.

\bibitem{MS2} {\sc S. Majumder} and {\sc S. Saha}, A note on the uniqueness of certain types of differential-difference polynomials, \textit{Ukr. Math. J.}, \textbf{73} (2021), 791-810. https://doi.org/10.1007/s11253-021-01960-9.

\bibitem{MJS} {\sc S. Majumder} and {\sc J. Sarkar}, Results on uniqueness of entire functions related to differential-difference polynomial, \textit{Bulletin of the Transilvania University of Brasov. Mathematics, Informatics, Physics. Series III; Brasov,} \textbf{13} (2) (2020), 623-648. DOI:10.31926/but.mif.2020.13.62.2.19.

\bibitem{MSP1} {\sc S. Majumder}, {\sc N. Sarkar} and {\sc D. Pramanik}, Entire functions and their high order difference operators, \textit{J. Contemp. Mathemat. Anal.}, \textbf{58} (6) (2023), 405-415.
 
\bibitem{MNS2} {\sc S. Majumder} and {\sc N. Sarkar}, Entire functions sharing two small functions with their difference operators, \textit{Ann. Univ. Ferrara}, \textbf{70} 
 
\bibitem{AZM} {\sc A. Z. Mohon'ko}, On the Nevanlinna characteristics of some meromorphic functions, \textit{Theory of Functions, Functional Analysis and Their Applications}, \textbf{14} (1971), 83-87.

\bibitem{MS} {\sc E. Mues} and {\sc N. Steinmetz}, Meromorphe Funktionen, die mit ihrer Ableitung Werte teilen, \textit{Manuscripta Math.,} \textbf{29} (1979), 195-206.

\bibitem{Qi} {\sc X. G. Qi}, Value distribution and uniqueness of difference polynomials and entire solutions of difference equations, \textit{Ann. Polon. Math.}, \textbf{102} (2011), 129-142.

\bibitem{QLY1} {\sc X. G. Qi}, {\sc N. Li} and {\sc L. Z. Yang}, Uniqueness of meromorphic functions concerning their differences and solutions of difference Painlev$\acute{E}$ equations, \textit{Comput. Methods Funct. Theory}, \textbf{18} (2018), 567-582.

\bibitem{QY1}  {\sc X. G. Qi} and {\sc L. Z. Yang}, Uniqueness of meromorphic functions concerning their shifts and derivatives, \textit{Comput. Methods Funct. Theory}, \textbf{20} (1) (2020), 159-178.

\bibitem{RY} {\sc L. A. Rubel} and {\sc C. C. Yang}, Values shared by an entire function and its derivative, \textit{Lecture Notes in Mathematics}, Springer-Verlag, Berlin, \textbf{599} (1977), 101-103.

\bibitem{HYX} {\sc H. Y. Xu}, On the value distribution and uniqueness of difference polynomials of meromorphic functions, \textit{Adv. Differ Equ 2013}, \textbf{90} (2013) https://doi.org/10.1186/1687-1847-2013-90.

\bibitem{KY} {\sc K. Yamanoi}, The second main theorem for small functions and related problems, \textit{Acta Math.,} \textbf{192} (2004), 225-294.

\bibitem{YY1} {\sc C. C. Yang} and {\sc H. X. Yi}, Uniqueness theory of meromorphic functions, Kluwer Academic Publishers, Dordrecht/Boston/London, 2003. 

\bibitem{LY} {\sc L. Yang}, Value distribution theory, Springer-Verlag, New York, 1993.

\end{thebibliography}
\end{document}